\documentclass[11pt,reqno]{amsart}
\usepackage[utf8]{inputenc}
\usepackage[T1]{fontenc}
\usepackage{lmodern}
\usepackage{amsfonts,amsthm,amsmath,amssymb,mathtools}
\usepackage{graphicx}
\usepackage[dvipsnames,table]{xcolor}   
\usepackage{enumerate}
\usepackage{hyperref}
\usepackage{color}
\usepackage{tikz-cd}
\usepackage{subcaption}
\usepackage[margin=1in]{geometry}
\usepackage[
    maxbibnames=99,
    backend=biber,
    style=alphabetic,
    sorting=nyt,
    giveninits=true
]{biblatex}
\DeclareFieldFormat{pages}{#1}
\renewbibmacro{in:}{
  \ifentrytype{article}
    {}
    {\bibstring{in}
     \printunit{\intitlepunct}}}
\DeclareFieldFormat
[article,inbook,incollection,inproceedings,patent,thesis,unpublished]
  {title}{\mkbibemph{#1}}
\DeclareFieldFormat{journaltitle}{#1\isdot}
\DeclareFieldFormat[article]{volume}{\mkbibbold{#1}}
\DeclareFieldFormat[article]{number}{\bibstring{number}\addnbspace #1}

\renewbibmacro*{journal+issuetitle}{
  \usebibmacro{journal}
  \setunit*{\addspace}
  \iffieldundef{series}
    {}
    {\newunit
     \printfield{series}
     \setunit{\addspace}}
  \printfield{volume}
  \setunit{\addspace}
  \usebibmacro{issue+date}
  \setunit{\addcomma\space}
  \printfield{number}
  \setunit{\addcolon\space}
  \usebibmacro{issue}
  \setunit{\addcomma\space}
  \printfield{eid}
  \newunit}

\newtheoremstyle{mystyle}
  {}
  {}
  {\itshape}
  {}
  {\bfseries}
  {.}
  { }
  {\thmname{#1}\thmnumber{ #2}\thmnote{ (#3)}}

\theoremstyle{mystyle}
\newtheorem{theorem}{Theorem}[section]
\newtheorem{lemma}[theorem]{Lemma}
\newtheorem{corollary}[theorem]{Corollary}
\newtheorem{proposition}[theorem]{Proposition}

\theoremstyle{definition}
\newtheorem{definition}[theorem]{Definition}

\theoremstyle{remark}
\newtheorem{remark}[theorem]{Remark}

\newcommand{\CP}{\mathbb{CP}}

\title{Small undecidable groups and unrecognizable 4-manifolds}

\author{Marc Kegel}
\address{Universidad de Sevilla, Dpto.\ de Álgebra,
Avda.\ Reina Mercedes s/n,
41012 Sevilla, Spain}
\email{\href{mailto:kegelmarc87@gmail.com}{kegelmarc87@gmail.com}}

\author{Shana Yunsheng Li}
\address{Dept.~of Mathematics, University of Illinois Urbana-Champaign, Urbana, IL 61801, USA}
\email{\href{mailto:yl202@illinois.edu}{yl202@illinois.edu}}

\author{Qiuyu Ren}
\address{Department of Mathematics, Stanford University, Stanford, CA 94305, USA}
\email{\href{mailto:qren18@stanford.edu}{qren18@stanford.edu}}

\begin{document}

\begin{abstract}
We construct a $3$-generator $9$-relator group with unsolvable word problem. We use the group to construct two fixed-size Adian--Rabin families of group presentations, one with $4$ generators and $11$ relators, and another with $2$ generators and $10$ relators. As a consequence, $\#_7(S^2\times S^2)$ is topologically unrecognizable and $\#_9(S^2\times S^2)$ is smoothly unrecognizable. These algebraic and topological results improve the previous best known bounds by Borisov, Tancer, and Gordon.

The construction of the group builds upon an example of Borisov and uses additional HNN extensions and Tietze eliminations to reduce the size of the presentation. We also provide a machine-checked Lean~4 formalization of the algebraic results.
\end{abstract}

\maketitle

\section{Introduction}

Many undecidability results in topology are proved by showing that a hypothetical algorithm for a topological decision problem would yield an algorithm for a known undecidable problem in group theory. This approach, pioneered by Markov~\cite{Markov1958}, has led to numerous undecidable problems in topology. These group-theoretic decision problems are also of intrinsic interest in combinatorial group theory. A natural quantitative question is how small a finite presentation can be while still exhibiting algorithmically undecidable behavior.

\subsection{Algebra}
The \textit{word problem} for finitely presented groups, originating in the work of Dehn \cite{Dehn1911}, is one of the classical decision problems in group theory. A fundamental negative result was obtained by Novikov \cite{Novikov1952,Novikov1955} and independently by Boone \cite{Boone1959}, who constructed finitely presented groups with unsolvable word problem. Subsequent work sought increasingly economical examples: Boone \cite{Boone1959} obtained an example with 32 defining relators, Collins reduced this to 18 \cite{Collins1966} and subsequently to 14 \cite{Collins1969}, and Borisov \cite{Borisov1969} obtained an example with 12 relators. The latter was the smallest number of relators previously known for a finitely presented group with unsolvable word problem. Our first result improves upon this.

\begin{theorem}\label{thm:word_problem}
There exists a $3$-generator $9$-relator group with unsolvable word problem.
\end{theorem}

The \textit{triviality problem} for finitely presented groups is to determine, from a finite presentation, whether the presented group is trivial. Following Novikov's proof of the undecidability of the word problem \cite{Novikov1955}, Adian proved that the triviality problem is undecidable, as are many other recognition problems for finitely presented groups \cite{Adian1955,Adian1957}. Rabin independently obtained a general version of this result \cite{Rabin1958}, now known as the Adian--Rabin theorem.

The undecidability of the triviality problem can be quantified using the notion of Adian--Rabin families.
\begin{definition}\label{def:adian-rabin}
A recursively enumerable family of finite group presentations $\mathcal{P}$ is called an \emph{Adian--Rabin family} if there is no algorithm that takes as input $P\in\mathcal{P}$ and decides whether or not the group presented by $P$ is trivial.
\end{definition}

By the work of Adian~\cite{Adian1955,Adian1957} and Rabin~\cite{Rabin1958}, the set of all finite group presentations forms an Adian--Rabin family. To measure the ``smallness'' of Adian--Rabin families, one may ask for such families with additional restrictions, for example, with a fixed (small) number of relators, or a fixed deficiency\footnote{Here, following~\cite{Tancer2023}, the deficiency is defined as the number of relators minus the number of generators; this is the negative of the usual group-theoretic convention.}. A longstanding open problem asks whether there exists an Adian--Rabin family consisting entirely of balanced presentations. The best previously known constructions are due to Tancer~\cite{Tancer2023}, who constructed an Adian--Rabin family of deficiency $9$, and Gordon~\cite{Gordon2022}, who constructed one with $13$ relators. Our next results improve upon both bounds.

\begin{theorem}\label{thm:adian-rabin_deficiency}
There exists an Adian--Rabin family of $4$-generator, $11$-relator group presentations.
\end{theorem}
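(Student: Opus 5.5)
We take the group $G=\langle a,b,c \mid r_1,\dots,r_9\rangle$ of Theorem~\ref{thm:word_problem}, which has unsolvable word problem, and apply an economical version of the Adian--Rabin construction. The standard recipe, for each word $w$ in $a,b,c$, builds a presentation $P_w$ so that the presented group is trivial exactly when $w=1$ in $G$. Since the $P_w$ are produced uniformly from $w$, the family $\{P_w\}$ is recursively enumerable, and deciding triviality would decide the word problem in $G$. The task is to carry this out with only one extra generator and two extra relators. The budget is $3+1=4$ generators and $9+2=11$ relators.

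The key steps are as follows. First, we replace $G$ by an amalgam or HNN-type enlargement in which $a,b,c$ and $w$ play symmetric roles. Concretely, we adjoin a new generator $t$ and consider
\[
H_w=\langle a,b,c,t \mid r_1,\dots,r_9,\; t^{-1}u_1 t = [w,a]\cdots,\; t^{-1}u_2 t = \cdots\rangle .
\]
The two new relators are chosen in the style of Rabin/Miller/Gordon. Each one conjugates a free-basis element of an auxiliary free subgroup into a word involving $w$, and is designed so that:
\begin{enumerate}[(i)]
\item if $w=1$ in $G$, the right-hand sides collapse, the relations force $t=1$, and then $a,b,c$ are killed one by one; we would like each generator to become a consequence of $w$;
\item if $w\neq 1$, the relevant elements generate free subgroups of the same rank on both sides, so $H_w$ is an HNN extension (or an amalgam of $G*\langle t\rangle$ with a free group) into which $G$ embeds; in particular $H_w\neq 1$.
\end{enumerate}

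To make (i) work with only two relators, we plan to take words such as $x_1=w^{-1}t^{-1}a t w$ style expressions. We first show that $G*\mathbb{Z}$ contains, for $w\neq1$, two words in $t$ and $[w,\cdot]$ that freely generate a free group, using the fact that $G$ is torsion-free or at least that $w$ has infinite order, or working in $G*\langle t\rangle$ via normal forms. We then equate them to words whose vanishing yields $a=b=c=t=1$. Because $G$ is $3$-generated, each of the two new relators should kill two generators at once, for example one relator forcing $t$ to be expressible in $a$ and another in $b,c$. Alternatively, we can use the trick of Gordon/Tancer: first pass to a $2$-generator (or known-structure) quotient of $G$ in which one of $a,b,c$ is redundant.

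The main obstacle is step (ii): proving that for $w\neq 1$ the subgroups identified by the new relators are free of equal rank and malnormal enough that Britton's lemma or the amalgam normal form applies, while keeping only two relators. We would first try to argue in the free product $G*\langle t\rangle$, choosing the relators' words as high powers or commutators alternating between $t$ and $w$. Freeness then follows from the normal form theorem for free products once $w\neq1$, and no special property of $G$ beyond $w\ne 1$ is needed. If that fails, we would fall back on adding a generator to $G$ via an HNN extension that makes $a,b,c$ conjugate to a single element, which reduces (i) to killing one element.
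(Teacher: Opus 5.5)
Your proposal has a genuine gap. The two new relators are never specified, so neither (i) nor (ii) is actually established, and the main step is explicitly deferred.

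Moreover, the one concrete shape you write down cannot work. Suppose $u_1,u_2$ and the right-hand sides are words in $a,b,c$, which is what you need for $H_w$ to be an HNN extension of $G$ with stable letter $t$ as claimed in (ii). Then every relator of $H_w$ has $t$-exponent sum $0$. So $a,b,c\mapsto 0$, $t\mapsto 1$ defines a surjection $H_w\to\mathbb{Z}$ for \emph{every} $w$, including $w=1$. Hence $H_w$ is never trivial, and (i) fails. If instead $t$ also occurs on the right-hand sides, the HNN or amalgam structure you invoke in (ii) is lost, and nothing replaces it.

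Your assertion that ``no special property of $G$ beyond $w\ne 1$ is needed'' is also false. For $H_w$ to collapse when $w=1$, the two new relators (with $w$ set to $1$) must normally generate $G*\langle t\rangle$. This forces $G_{ab}\oplus\mathbb{Z}$ to be $2$-generated and $G$ to have weight at most $2$. Your alternatives do not repair this. Passing to a quotient of $G$ breaks the correspondence between $w=1$ in the quotient and $w=1$ in $G$. An extra HNN extension making $a,b,c$ conjugate costs generators and relators you do not have.

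The missing idea is Proposition~\ref{prop:extra_properties}(1): $G=\Gamma_6$ is normally generated by the single generator $z$. Hence one relation suffices to kill $G$ once the auxiliary generators die. The paper does not build a new construction. It feeds $G$ and this normal generator into Tancer's explicit construction \cite[Theorem~9]{Tancer2023}, with $w$ replaced by $[w,\beta]$ in his relation (3) (see Remark~\ref{rem:tancer}). That construction adds three auxiliary generators $\alpha,\beta,\gamma$ and four relations (1)--(4), the normal generator entering through (4). This gives a $6$-generator, $13$-relator Adian--Rabin family whose correctness is Tancer's (Miller's) argument.

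The budget of $+1$ generator and $+2$ relators is then reached purely by Tietze moves. One substitutes $\delta=\gamma\alpha^{-1}$, eliminates $\gamma$ using (1), and eliminates $z$ itself using (4). So the final presentation does not have the shape you are aiming for, namely $G$'s nine relators kept verbatim plus one stable letter. Instead, two auxiliary generators survive and one of $G$'s own generators is eliminated.
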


\begin{theorem}\label{thm:adian-rabin_relators}
There exists an Adian--Rabin family of $2$-generator, $10$-relator group presentations.
\end{theorem}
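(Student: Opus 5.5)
The plan is to derive Theorem~\ref{thm:adian-rabin_relators} from the $3$-generator $9$-relator group $G=\langle a,b,c\mid r_1,\dots,r_9\rangle$ of Theorem~\ref{thm:word_problem}, using the standard Adian--Rabin reduction and then compressing generators. For each word $w$ in $a,b,c$ I will form a presentation $P_w$ whose group is trivial if and only if $w=1$ in $G$. Because $w\mapsto P_w$ will be computable, an algorithm deciding triviality on the family $\{P_w\}$ would solve the word problem in $G$, contradicting Theorem~\ref{thm:word_problem}. The family $\{P_w\}$ is then recursively enumerable, so it is an Adian--Rabin family in the sense of Definition~\ref{def:adian-rabin}.

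The construction is the classical Rabin trick, kept small. Assume first that $w$ has infinite order whenever $w\neq 1$. This can be arranged by a preliminary HNN or free-product modification, or by choosing $G$ torsion-free, as the Borisov-type groups presumably are. I then add a new generator $t$ and impose a bounded number of relations, for instance of the form
\[
t^{-1}at = [w,\,u_1],\qquad t^{-1}bt=[w,\,u_2],\qquad t^{-1}ct = [w,\,u_3],
\]
for suitable fixed words $u_i$. The essential point is that $w=1$ forces $a,b,c$ to be trivial, and hence forces $t$ to be trivial via one further relation such as $t=[w,u_4]$. If instead $w\neq1$, the group is a nontrivial HNN extension or amalgamated product into which $G$ embeds. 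I would then use Tietze transformations to solve for some of the original generators in terms of the others. The goal is to end with exactly two generators, for example $t$ and $a$, with $b$ and $c$ expressed as words, and exactly $10$ relators. The new relations are chosen so that each one lets me eliminate a generator, so the relator count rises by only one net while the generator count drops.

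For the case $w\neq1$, I will prove nontriviality by exhibiting the group as an iterated HNN extension or free product with amalgamation along free subgroups. The required freeness, for instance that $\langle w,u\rangle$ is free of rank $2$ in $G * \langle x\rangle$, follows from standard small-cancellation or Britton's lemma arguments in the free product. I then apply Britton's lemma to show $G$ injects, so the group is nontrivial.

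The main obstacle is the relator count. The generic Rabin construction adds several relators, and only a careful choice lets me eliminate two of the three generators $a,b,c$ while the original nine relators plus the new conjugation relations total exactly $10$. I would try first to exploit the specific shape of the presentation from Theorem~\ref{thm:word_problem}. In particular, I would use the fact that some of its relations are HNN-type conjugation relations, such as $t^{-1}xt=y$, which can be merged with the new relations so that two old generators become definable words. Those definitional relations are then absorbed, leaving $2$ generators and $10$ relators.
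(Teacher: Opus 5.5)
Your proposal does not prove the statement. Beyond Theorem~\ref{thm:word_problem}, the only content here is getting from a $3$-generator $9$-relator group to a $2$-generator $10$-relator family. You call this ``the main obstacle'' and leave it unresolved, and the concrete scheme you write down fails at each step.

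\emph{Eliminations.} A relation such as $t^{-1}bt=[w,u_2]$ cannot be used to Tietze-eliminate $b$ or $c$, because for general $w$ that generator occurs inside $w$. Any elimination of an original generator must therefore come from a $w$-free relation.

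\emph{Counting.} Your scheme has $4$ generators and $13$ relators, so even two successful eliminations leave $2$ generators and $11$ relators. In general, adding $k$ generators and $r$ relators and performing $e$ eliminations reaches $(2,10)$ from $(3,9)$ only if $e=k+1$ and $r=k+2$. So the whole Rabin cascade must cost exactly one relator net, and you give no mechanism for this. ``Merging'' conjugation relations of $G$ has no basis either: in \eqref{eq:G_presentation} those relations of $\Gamma_6$ were already spent on the eliminations \eqref{eq:substitution}.

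\emph{Nontriviality.} To read $t^{-1}at=[w,u_1]$, $t^{-1}bt=[w,u_2]$, $t^{-1}ct=[w,u_3]$ as an HNN extension, the map $a\mapsto[w,u_1]$, $b\mapsto[w,u_2]$, $c\mapsto[w,u_3]$ must extend to an isomorphism from $\langle a,b,c\rangle=G$ onto $\langle [w,u_1],[w,u_2],[w,u_3]\rangle$. That is impossible if, as you intend, the latter is free while $G$ is not. Adding $t=[w,u_4]$ also either puts $t$ in the base group or entangles it with itself, so no stable letter remains for Britton's lemma. The classical Rabin--Miller construction avoids this by amalgamating along free subgroups on both sides.

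\emph{Torsion.} The torsion-freeness you assume is only ``presumed'', and any preliminary modification to force it would cost relators you cannot spare.

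The paper does not build the family by hand. It applies Lemma~2.1 of \cite{Gordon2022}, which turns a group with unsolvable word problem satisfying Gordon's condition~(2.1) into a $2$-generator Adian--Rabin family with one more relator than the group; this is how $9$ relators become $10$. The only thing to check is (2.1). It follows from Proposition~\ref{prop:extra_properties}(2): the generator $d$ in \eqref{eq:G_presentation} is trivial in $(\Gamma_6)_{ab}$ (Proposition~\ref{prop:extra_properties_detailed}).

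Your argument never identifies this homological property of $G$ or any substitute for it. To close the gap, either invoke Gordon's lemma with that verification, or give an explicit construction adding exactly one relator net, with a correct embedding argument along free subgroups for the case $w\neq1$.
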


\subsection{Topology}
We say that a closed PL $n$-manifold $M$ is \emph{PL-unrecognizable} if there is no algorithm that, given a PL triangulation of a closed manifold $X$, decides whether or not $X$ is PL-homeomorphic to $M$. Similarly, $M$ is \emph{topologically unrecognizable} if there is no algorithm deciding whether or not $X$ is homeomorphic to $M$. In dimensions $n\le 6$, every PL $n$-manifold admits a compatible smooth structure, unique up to diffeomorphism~\cite{Milnor_overview}. Consequently, PL-unrecognizability and smooth unrecognizability are equivalent in these dimensions.

By Perelman's proof of the Geometrization Conjecture~\cite{Perelman2002,Perelman2003Surgery,Perelman2003Extinction}, every closed manifold of dimension at most three is recognizable~\cite{Kuperberg2019}. In contrast, the fundamental work of Markov~\cite{Markov1958} and Novikov (see \cite{Volodin1974,Chernavsky2006}) shows that there are unrecognizable manifolds in every dimension at least four. In fact, in dimensions greater than four, every nonempty closed PL manifold is PL-unrecognizable \cite{Chernavsky2006}.

In dimension four, the situation is more mysterious. It is a major open question whether $S^4$ is recognizable; see, for example, Problem~4.53 in the K3 problem list \cite{K3}. Markov's construction implies that the existence of an Adian--Rabin family with $r$ relators yields the smooth and topological unrecognizability of $\#_r(S^2\times S^2)$ (see, for example, \cite{friedl2026unsolvability}); the existence of such an $r$ was first shown in \cite{Adian1955}. Subsequent work reduced the number of $S^2\times S^2$ factors. In particular, Tancer \cite{Tancer2023} observed that a deficiency-$d$ Adian--Rabin family yields the topological unrecognizability of $\#_d(S^2\times S^2)$ as a consequence of Freedman's theorem \cite{Freedman1982}, and Gordon \cite{Gordon2022} observed that an $r$-relator Adian--Rabin family yields the smooth unrecognizability of $\#_{r-1}(S^2\times S^2)$ (although he only stated this in the topological category). Using their constructions of Adian--Rabin families mentioned above, they thereby showed the topological unrecognizability of $\#_9(S^2\times S^2)$ and the smooth unrecognizability of $\#_{12}(S^2\times S^2)$, respectively. Our Theorems~\ref{thm:adian-rabin_deficiency} and~\ref{thm:adian-rabin_relators} yield the following improvements.

\begin{corollary}
$\#_n(S^2\times S^2)$ is topologically unrecognizable for $n\ge7$ and smoothly unrecognizable for $n\ge9$.\qed
\end{corollary}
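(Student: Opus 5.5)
The plan is to derive the corollary from Theorems~\ref{thm:adian-rabin_deficiency} and~\ref{thm:adian-rabin_relators} using the two known reductions recalled above. The first is Tancer's observation: a deficiency-$d$ Adian--Rabin family gives topological unrecognizability of $\#_d(S^2\times S^2)$. The second is Gordon's observation: an $r$-relator Adian--Rabin family gives smooth unrecognizability of $\#_{r-1}(S^2\times S^2)$. Theorem~\ref{thm:adian-rabin_deficiency} provides a family with $4$ generators and $11$ relators, so deficiency $7$. Theorem~\ref{thm:adian-rabin_relators} provides a family with $10$ relators. This gives the base cases $n=7$ (topological) and $n=9$ (smooth).

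For the base cases I would recall the Markov-type construction. Given a presentation $P$ with $g$ generators and $r$ relators, form the boundary of a $5$-dimensional thickening of the presentation $2$-complex. Alternatively, do surgery on $\#_g(S^1\times S^3)$ along $r$ curves representing the relators, with suitable framings. The result is a closed $4$-manifold $M_P$ with $\pi_1(M_P)=G_P$, constructed algorithmically with a triangulation. When $G_P$ is trivial, $M_P$ is simply connected with even intersection form of rank $2(r-g)$ and signature $0$. Freedman's classification then gives $M_P\approx\#_{r-g}(S^2\times S^2)$. When $G_P$ is nontrivial, $M_P$ is not even simply connected. Hence an algorithm recognizing $\#_{d}(S^2\times S^2)$ would decide triviality in the family, which is a contradiction.

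For the smooth statement, Gordon's refinement uses the $2$-generator structure and a trick that absorbs one relator. I would cite it as stated. Note that for the $2$-generator family with $10$ relators, Tancer's topological bound would give only $n=8$; the smooth bound $n=9$ comes from Gordon's argument.

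For $n$ above the base cases, I would stabilize. Replace $M_P$ by $M_P\#_{n-n_0}(S^2\times S^2)$. Equivalently, append $n-n_0$ trivial relators to every presentation; this changes neither the group nor recursive enumerability, and raises both the deficiency and the relator count. The same dichotomy holds. In the trivial case Freedman, or in the smooth case the construction itself, identifies the manifold with $\#_n(S^2\times S^2)$. In the nontrivial case $\pi_1\neq 1$.

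The main obstacle is the smooth identification in the trivial-group case. Freedman does not apply smoothly, so one needs that the construction yields an honest diffeomorphism to $\#_{r-1}(S^2\times S^2)$ whenever $G_P=1$. Gordon's argument provides this, so I will invoke it as a black box; the remaining steps are bookkeeping with the counts $11-4=7$ and $10-1=9$.
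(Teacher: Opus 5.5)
Your proposal is correct and follows the paper's route: the paper obtains the corollary directly by feeding the deficiency-$7$ family of Theorem~\ref{thm:adian-rabin_deficiency} into Tancer's Freedman-based observation, and the $10$-relator family of Theorem~\ref{thm:adian-rabin_relators} into Gordon's $r\mapsto r-1$ observation; your stabilization by trivial relators fills in the passage to larger $n$, which the paper leaves implicit. One small correction: Gordon's $\#_{r-1}$ bound, as recalled in the paper, applies to any $r$-relator Adian--Rabin family and does not use that the presentations have two generators (that is a feature of the family in Theorem~\ref{thm:adian-rabin_relators}, not of the reduction), though this does not affect your argument since you invoke the bound as a black box.
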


\begin{remark}
    Using analogous arguments, one can also show that $(\#_a\CP^2)\#(\#_b\overline{\CP^2})$ is topologically unrecognizable for $\min(a,b)\ge7$ and smoothly unrecognizable for $\min(a,b)\ge9$.
\end{remark}

\begin{remark}
Cameron Gordon informed us that he has independently obtained improvements on the smallest known Adian--Rabin families in the spirit of Theorems~\ref{thm:adian-rabin_deficiency} and~\ref{thm:adian-rabin_relators}, based on a modification of Borisov's group in \cite{Borisov1969}. Using these improvements, he was able to show both the smooth and the topological unrecognizability of $\#_8(S^2\times S^2)$. This will appear in an upcoming paper.
\end{remark}

\subsection{Constructions of the Adian--Rabin families}
\label{subsec:adian_rabin}
The group $G$ in Theorem~\ref{thm:word_problem} will satisfy the following extra properties.
\begin{proposition}\label{prop:extra_properties}
The group $G$ in Theorem~\ref{thm:word_problem} admits a $3$-generator $9$-relator presentation satisfying the following additional properties:
\begin{enumerate}[(1)]
\item There is a generator that normally generates $G$.
\item There is a generator that is trivial in the abelianization of $G$.
\end{enumerate}
\end{proposition}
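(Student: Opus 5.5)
The plan is to read both properties directly off the explicit $3$-generator $9$-relator presentation of $G$ built in the proof of Theorem~\ref{thm:word_problem}: Borisov's group, followed by the extra HNN extensions and the Tietze eliminations. Both properties are statements about specific quotients of $G$, namely $G/\langle\langle g\rangle\rangle$ and $G^{\mathrm{ab}}$. Each can be checked by a finite manipulation of the relators, with no reference to the undecidability argument. I will fix the final presentation $\langle x_1,x_2,x_3 \mid r_1,\dots,r_9\rangle$ and, if needed, first apply harmless Tietze moves to it. Examples are replacing a generator by a product of generators, or conjugating or inverting relators. The aim is to make the relevant generator appear as one of the three letters, since both properties depend only on the group together with a choice of generating set.

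For property (1), I will pick the generator coming from the last HNN stable letter or from a distinguished generator of Borisov's construction. I will then show that imposing $x_i=1$ collapses the group. The mechanism I expect is that the relators have the shape of conjugation relations. An example is $t^{-1}ata^{-k}=w$, where $w$ becomes a power or commutator of the remaining letters once $x_i=1$. Setting $x_i=1$ should then successively force the other two generators to satisfy relations such as $y=y^2$ or $z=[y,z]$, and hence to be trivial. Concretely, I will work through the nine relators in a well-chosen order in the quotient $G/\langle\langle x_i\rangle\rangle$ and derive $x_j=1$ and then $x_k=1$. This is Tietze elimination in the trivial direction, and it is routine once the presentation is written down.

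For property (2), I will compute the $9\times 3$ integer matrix of exponent sums of the relators and reduce it to Smith normal form, or more simply exhibit an integer combination of its rows equal to the unit vector $e_j$ for some $j$. That shows $x_j$ is trivial in $G^{\mathrm{ab}}$. The natural candidate is a generator that occurs in a relator of the form $u^{-1}x_ju=x_j^{m}$ with $m\neq 1$, or in a relator expressing $x_j$ as a commutator. Such a relator gives $(m-1)x_j=0$ or $x_j=0$ directly, and other relators should clear the torsion. If no relator isolates $x_j$ so cleanly, I fall back on the full Smith normal form computation, which is a mechanical finite calculation and can be cross-checked in the Lean formalization.

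The main obstacle I anticipate is not either verification in isolation. It is ensuring that both properties hold for the same $3$-generator $9$-relator presentation after all the size-reducing Tietze eliminations. The eliminations may have removed exactly the generator that was normally generating, or that was trivial in the abelianization. If so, I would first try re-expressing the lost generator as a word in the surviving ones and choosing a Nielsen-equivalent generating triple, which keeps the numbers of generators and relators unchanged, so that the required elements become generators again.
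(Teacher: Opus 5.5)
Your strategy matches the paper's in outline: work in the final presentation \eqref{eq:G_presentation} on $d,s_1,z$, kill the last stable letter $z$, and abelianize. But you never carry out either check, and you attribute the collapse to the conjugation shape of the relators. That accounts for $d$ but not for $s_1$.

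Setting $z=1$ in \eqref{eq:substitution} gives $t=c=y=e=k=d$ and $s_2=d^{-1}s_1^{-1}d$. Then $dye=y$ gives $d^2=1$, and $d^4s_1=s_1d$ together with $s_1c=cs_1$ gives $d^3=1$, so $d=1$. At this point every relator other than the three Thue-type relators $d^iF_ie^ic=cd^iE_ie^i$ holds in $\mathbb{Z}$ with $s_1$ a generator and $s_2=s_1^{-1}$. So the HNN/Tietze structure alone cannot kill $s_1$: $G/\langle\langle z\rangle\rangle$ is the cyclic group generated by $s_1$ subject only to $F_i=E_i$ ($i=1,2,3$) with $s_2=s_1^{-1}$.

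The missing idea is property~(c) of $\Pi$. The relator for $i=2$ becomes $F_2=E_2$. When $s_2=s_1^{-1}$ one has $\eta(x)\mapsto 1$ and $\eta(y)\mapsto s_1^{-1}$, so Matiyasevich's explicit $\mathcal{F}_2=xxyy$, $\mathcal{E}_2=yxx$ turn this relation into $s_1^{-2}=s_1^{-1}$, i.e.\ $s_1=1$. This condition was deliberately built into the choice of $\Pi$ and the encoding $\eta$; without it the quotient need not be trivial. So ``working through the nine relators'' is not routine bookkeeping: it hinges on this input, which your plan does not identify.

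For (2), your row-combination idea is right and is what the paper does. However, you should avoid the Thue-type rows, since the exponent sums of $F_3$ and $E_3$ are not explicitly available. In $(\Gamma_6)_{\mathrm{ab}}$ all of $t,c,y,e,k$ equal $d$ and $s_2=-s_1$. Then $d^4s_1=s_1d$ gives $3d=0$ and $dye=y$ gives $2d=0$, hence $d=0$.

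The obstacle you anticipate does not occur. The surviving generators are $d,s_1,z$, which already include both required elements, so no Nielsen moves are needed.
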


Now we can combine the work of Gordon~\cite{Gordon2022} and Tancer~\cite{Tancer2023} with Theorem~\ref{thm:word_problem} and Proposition~\ref{prop:extra_properties} to deduce Theorems~\ref{thm:adian-rabin_deficiency} and~\ref{thm:adian-rabin_relators}.

\begin{proof}[Proof of Theorems~\ref{thm:adian-rabin_deficiency} and~\ref{thm:adian-rabin_relators}]
Theorem~\ref{thm:adian-rabin_relators} is a direct consequence of Gordon ~\cite[Lemma~2.1]{Gordon2022} applied to our group $G$ in Theorem~\ref{thm:word_problem}, noting that condition (2.1) in \cite{Gordon2022} follows from Proposition~\ref{prop:extra_properties}(2). 
    
Theorem~\ref{thm:adian-rabin_deficiency} follows from the construction in \cite[Theorem~9]{Tancer2023}. More precisely, in our setup, Tancer's argument builds a $6$-generator $13$-relator Adian--Rabin family from our group $G$ and the normal generator $r_1'$ provided by Proposition~\ref{prop:extra_properties}(1). However, GPT suggested that two additional Tietze eliminations can be performed to cancel two pairs of generators and relators. In the notation of \cite[(1)--(4)]{Tancer2023}, one first replaces the generator $\alpha$ by a new generator $\delta:=\gamma\alpha^{-1}$. Then one may cancel the generator $\gamma$ with relation (1), and the generator $r_1'$ with relation (4).
\end{proof}

\begin{remark}\label{rem:tancer}
As pointed out by GPT and confirmed to us by Martin Tancer, two corrections appear to be necessary in \cite{Tancer2023} concerning the construction of Adian--Rabin families from groups with unsolvable word problem.
\begin{enumerate}
\item The word $w$ in equation (3) of \cite{Tancer2023} should be replaced by $[w,\beta]$, as was done in Miller \cite{Miller1992}, because $\alpha^{-3}w\alpha^3$ cannot be a basis element of a free group if $w$ is torsion, breaking Tancer's argument.
\item Applying \cite[Remark~10]{Tancer2023} to the group $G$ would yield a $2$-generator $9$-relator Adian--Rabin family, beating both Theorems~\ref{thm:adian-rabin_deficiency} and~\ref{thm:adian-rabin_relators}. However, the remark seems to be unjustified.
\end{enumerate}
\end{remark}

\subsection{Proof outline of Theorem~\ref{thm:word_problem}}

We briefly discuss the proof of Theorem~\ref{thm:word_problem}, which uses a standard strategy in combinatorial group theory and
builds directly on the ideas in Borisov's construction~\cite{Borisov1969}.
We start with Borisov's group $\Gamma_4$, which has unsolvable word
problem~\cite{Borisov1969}. We then construct in Section~\ref{sec:construction} two HNN extensions
\begin{equation*}
    \Gamma_4\hookrightarrow\Gamma_5\hookrightarrow\Gamma_6.
\end{equation*}
For the first extension, we identify two suitable subgroups of
$\Gamma_4$ and an isomorphism between them; for the second, we do the
same for two subgroups of $\Gamma_5$. 
The main technical step is to prove that these subgroups have precisely the claimed presentations, with no additional relations. This is carried out in Section~\ref{sec:proof_lemmaG4-6}. Once this is established, the embeddings follow from the standard theory of HNN extensions, in particular, Britton's lemma; see, for example, \cite[Theorem~32(ii) and Proposition~34]{Cohen1989}.

Since $\Gamma_4$ embeds into $\Gamma_6$ and has unsolvable word problem,
$\Gamma_6$ also has unsolvable word problem. Finally, we simplify the
presentation of $\Gamma_6$: seven relators are redundant, and six
generator--relator pairs can be eliminated by Tietze transformations.
This leaves a presentation with three generators and nine relators,
proving Theorem~\ref{thm:word_problem}.

\subsection{Note on AI use}

The main construction in this paper was essentially developed by GPT-5.6 Sol Pro. Starting from Borisov's group with unsolvable word problem, the model suggested the two additional HNN extensions used in Sections~\ref{sec:construction} and~\ref{sec:proof_lemmaG4-6} and the subsequent Tietze transformations leading to the $3$-generator, $9$-relator presentation. It also suggested the two additional Tietze eliminations used in the proof of Theorem~\ref{thm:adian-rabin_deficiency} and drew our attention to the issues discussed in Remark~\ref{rem:tancer}.

We independently checked and reorganized these arguments and verified the literature references. We then used Codex extensively to develop the Lean~4 formalization as explained below. The authors have reviewed the final mathematical argument and take responsibility for its correctness. This paper does not contain any text directly generated by AI.

\subsection{Lean autoformalization}
With the assistance of GPT-5.6 Sol in Codex, we formalized in Lean~4 the proofs of Theorems~\ref{thm:word_problem},~\ref{thm:adian-rabin_deficiency}, and~\ref{thm:adian-rabin_relators}, together with all their dependencies, including Matiyasevich's $2$-letter $3$-relator semigroup/Thue system with unsolvable word problem \cite{Matiyasevich1995} and Borisov's construction \cite{Borisov1969}. The complete formalization is available in the accompanying GitHub repository \cite{Kegel2026AdianRabin}. We briefly summarize the development process.
\begin{enumerate}
\item We first used GPT-5.6 Sol Pro to produce an informal proof of Theorem~\ref{thm:word_problem}, which was recorded in a companion PDF.
\item We asked Codex to formalize the statements of the three main theorems. After some simplification, these occupy fewer than 80 lines of Lean code. We checked the resulting statements manually.
\item We then provided Codex with the informal proof and asked it to identify the main dependencies and construct the overall Lean proof skeleton. At this intermediate stage, five project-specific assumptions were represented by temporary Lean axioms. The most substantial ones concerned an undecidable $2$-generator $3$-relator semigroup/Thue system, initially attributed to Matiyasevich's 1967 construction \cite{Matiyasevich1967}, Borisov's unsolvability reduction from groups to semigroups/Thue systems, and the injectivity of the two further compressions from Borisov's group $\Gamma_4$.
\item For each temporary axiom, we asked Codex to identify the relevant literature, collected the sources in a dedicated reference directory, and then asked it to formalize the required arguments. This discharged all but the Thue-system input. Codex traced the route through Matiyasevich's 1967 construction back through earlier work to Novikov's undecidability theorem \cite{Novikov1955}, deciding that a direct formalization of that route would be disproportionately large.
\item To remove the remaining axiom, we followed a different route. Starting from a fixed Turing machine with an undecidable halting problem, Codex formalized Post's machine-to-Thue construction \cite{Post1947} and then Matiyasevich's compression to a binary three-rule Thue system \cite{Matiyasevich1995}. This completed the proof of the required undecidability statement.
\item We prompted Codex to find the improvement of \cite[Theorem~9]{Tancer2023} that we used in the proof of Theorem~\ref{thm:adian-rabin_deficiency} above. We asked Codex to formalize the reductions to Theorems~\ref{thm:adian-rabin_deficiency} and~\ref{thm:adian-rabin_relators} following Tancer \cite{Tancer2023} (with the improvement above) and Gordon \cite{Gordon2022}.
\item After the proof was complete, we asked Codex to audit the development for project-specific axioms and \texttt{sorry} declarations, remove redundant arguments, simplify proofs, and reorganize the repository. The complete proof contains approximately 28,000 lines of Lean code. A transitive axiom audit of the three main theorems reports only Lean's standard principles \texttt{propext}, \texttt{Classical.choice}, and
\texttt{Quot.sound}.
\item Finally, we verified that the formalization is accepted by the Lean Comparator \cite{Boving2025Comparator}.
\end{enumerate}

\subsection{Acknowledgements}
This work started during the 2026 Trisectors Workshop at Western Washington University. We thank the organizers of the workshop for fostering a stimulating environment for productive discussion and research. We warmly thank Nathan Dunfield for his substantial help with this project. We thank Cameron Gordon for his careful reading of a first draft of this paper and for many valuable comments. We thank Martin Tancer for clarifications regarding his work~\cite{Tancer2023}. QR thanks Shurui Liu for helpful discussions on AI tools and autoformalization.

MK was supported by a Ram\'on y Cajal grant \mbox{(RYC2023-043251-I)} and PID2024-157173\-NB-I00 funded by MCIN/AEI/10.13039/501100011033, by ESF+, and by FEDER, EU; and by a VII Plan Propio de Investigación y Transferencia (SOL2025-36103) of the University of Sevilla. SYL was partially supported by the US National Science Foundation grant DMS-2303572. This research was conducted during the period when QR served as a Clay Research Fellow.

\section{The construction}
\label{sec:construction}

We start with a finitely presented semigroup $\Pi$ of the form 
\begin{equation*}
    \Pi = \langle s_1, s_2\mid F_i\sim E_i,\ i=1,2,3\rangle
\end{equation*}
such that:
\begin{enumerate}[(a)]
    \item There exists a positive word $P$ over $\{s_1,s_2\}$ such that there is no algorithm to decide, given a word $Q$, whether or not $Q\sim P$ in $\Pi$;
    \item The words $E_i$, $F_i$ ($i=1,2,3$) and $P$ all contain both of the generators $s_1$ and $s_2$;
    \item In a group containing $s_1$ and $s_2$, if $s_1=s_2^{-1}$ and $F_2=E_2$, then $s_1=s_2=1$.
\end{enumerate}
Such a $\Pi$ can be obtained from Matiyasevich's construction \cite[eq. (6)]{Matiyasevich1995} by applying a faithful encoding to it; see Appendix~\ref{appen:encoding}.

Consider the following sequence of groups and natural homomorphisms $$\Gamma_1\longrightarrow\Gamma_2\longrightarrow\cdots \longrightarrow\Gamma_6,$$ where each $\Gamma_j$ is given by the relators enclosed in the square brackets in \eqref{eq:tower} and the generators $s_1, s_2, c, d, e, k, t, y, z$ occurring in the corresponding relators. The subscripts $i$ and $\beta$ vary in $\{1, 2, 3\}$ and $\{1, 2\}$, respectively.

\begin{equation}
\label{eq:tower}
\begin{gathered}
\Gamma_6\!\left[
\begin{array}{c}
\Gamma_5\!
\left[
\begin{array}{c}
\Gamma_4\!
\left[
\begin{array}{c}
\Gamma_{3}\!\left[
  \begin{array}{c}
    \Gamma_{2}\!\left[
      \begin{array}{c}
        \Gamma_{1}\!\left[\, d^{4}s_{\beta}=s_{\beta}d,\quad es_{\beta}=s_{\beta}e^{4} \,\right]\\[3pt]
        s_{\beta}c=cs_{\beta}\\[3pt]
        d^{i}F_{i}e^{i}c=cd^{i}E_{i}e^{i}
      \end{array}
    \right]\\[3pt]
    ct=tc,\quad dt=td
  \end{array}
\right]\\[3pt]
ck=kc,\quad ek=ke,\quad (P^{-1}tP)k=k(P^{-1}tP)
\end{array}\right]\\[3pt]
dye=y,\quad ey=yd,\quad cy=yc,\quad 
ty=yk,\quad s_1ys_2=y
\end{array}\right]\\[6pt]
dz=zt,\quad tz=zc,\quad cz=zy
\end{array}\right]
\end{gathered}
\end{equation}

For example, $\Gamma_2 = \langle s_1,s_2, c,d,e\mid  d^{4}s_{\beta}=s_{\beta}d,\ es_{\beta}=s_{\beta}e^4, s_{\beta}c=cs_{\beta},\ d^iF_ie^ic=cd^iE_ie^i\rangle$. 

We claim that these natural homomorphisms arise from HNN extensions and are therefore injective. The subsequence $\langle d,e\rangle\le\Gamma_1\le\Gamma_2\le\Gamma_3\le\Gamma_4$ of embeddings was established by Borisov~\cite{Borisov1969}. He also showed the following.

\begin{theorem}[Borisov]\label{thm:borisov}
    If $Q$ is a positive word over $\{s_1,s_2\}$, then $Q\sim P$ in $\Pi$ if and only if $(Q^{-1}tQ)k = k(Q^{-1}tQ)$ in $\Gamma_4$. Consequently, $\Gamma_4$ has unsolvable word problem by property (a) of $\Pi$. \qed
\end{theorem}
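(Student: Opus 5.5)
We would follow Borisov's original strategy (a refinement of the Novikov--Boone argument), organized around the tower of HNN extensions $\langle d,e\rangle\le\Gamma_1\le\Gamma_2\le\Gamma_3\le\Gamma_4$, which we take as established. The ``consequently'' clause is then immediate. Given any positive word $Q$, the element $[Q^{-1}tQ,k]$ is effectively computable. A solution to the word problem in $\Gamma_4$ would therefore decide whether $Q\sim P$, contradicting property (a). So the substance lies in the equivalence.

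\textbf{Forward direction.} This is a direct computation. Suppose $Q\sim P$ in $\Pi$. Then $Q$ and $P$ are connected by a chain of elementary substitutions $F_i\leftrightarrow E_i$. We would show that each substitution $XF_iY\to XE_iY$ changes $Q^{-1}tQ$ only by conjugation by an element of the subgroup $\langle c,e\rangle$, modulo the relators of $\Gamma_2$ and $\Gamma_3$.

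The key identities to use are the following.
\begin{itemize}
\item The relators $d^4s_\beta=s_\beta d$ and $es_\beta=s_\beta e^4$ let us move powers of $d$ and $e$ across a positive word, changing exponents in a controlled way.
\item The relator $d^iF_ie^ic=cd^iE_ie^i$ then converts an occurrence of $F_i$ into $E_i$ at the cost of $c$-conjugation.
\item Since $t$ commutes with $c$ and $d$, the $d$-powers produced on the left are absorbed by $t$.
\end{itemize}
Iterating, we obtain $Q^{-1}tQ=g^{-1}P^{-1}tPg$ for some $g\in\langle c,e\rangle$. Because $k$ commutes with $c$ and $e$ and with $P^{-1}tP$, it follows that $k$ commutes with $Q^{-1}tQ$.

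\textbf{Reverse direction.} This is the hard part. View $\Gamma_4$ as the HNN extension of $\Gamma_3$ with stable letter $k$ and associated subgroup $A=\langle c,e,P^{-1}tP\rangle$, with $k$ acting trivially on $A$. If $k$ commutes with $w=Q^{-1}tQ\in\Gamma_3$, Britton's lemma applied to $k^{-1}wkw^{-1}$ forces $w\in A$.

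We would then analyze $\Gamma_3$ as an HNN extension of $\Gamma_2$ with stable letter $t$, centralizing $\langle c,d\rangle$. Writing $w$ as a word in $c,e,P^{-1}tP$ and applying Britton's lemma, we would reduce in $t$-length-one form to an equation
\[
Q^{-1}tQ=u^{-1}P^{-1}tPu,\qquad u\in\langle c,e\rangle,
\]
up to factors from $\langle c,d\rangle$. This gives $PuQ^{-1}\in\langle c,d\rangle$ in $\Gamma_2$.

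Finally we would pass to $\Gamma_2$ as an HNN extension of $\Gamma_1$ with stable letter $c$. Here $c$ conjugates $\langle s_1,s_2,\ d^iF_ie^i\rangle$ to $\langle s_1,s_2,\ d^iE_ie^i\rangle$. The task is to read off from a reduced $c$-word an explicit derivation $P\sim Q$ in $\Pi$.

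\textbf{Main obstacle.} We expect the last step to be the crux, in two parts.
\begin{itemize}
\item One must show that the associated subgroups of $\Gamma_1$ are free on the indicated generators, so that the HNN extension is valid.
\item One must show that equalities in $\Gamma_1$ between such words only arise from genuine semigroup equalities; this is Borisov's ``positive word'' lemma.
\end{itemize}
We would attack this using the structure of $\Gamma_1$ as an iterated HNN or amalgam built from the Baumslag--Solitar-type relations, together with a normal-form argument showing $d$- and $e$-exponents act as counters. Property (b), that all relator words contain both letters, is what keeps these subgroups free.
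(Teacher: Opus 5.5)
There is a genuine gap: your proposal stops at an intermediate reduction and never proves the converse, so the unsolvability of $\Gamma_4$ is not established.

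\textbf{What is correct.} Your forward direction works. Explicitly, if $Q=XF_iY$ and $Q'=XE_iY$, then $Q'=(d^{-n}c^{-1}d^{n})\,Q\,(e^{m}ce^{-m})$ with $n=4^{|X|}i$ and $m=4^{|Y|}i$, and the left factor commutes with $t$. Your reduction of the converse is also correct: Britton's lemma for $k$, then for $t$, gives exactly $Q^{-1}tQ=u^{-1}P^{-1}tPu$ with $u\in\langle c,e\rangle$ and $PuQ^{-1}\in\langle c,d\rangle$.

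\textbf{The gap.} The step that carries the theorem is only announced: passing from $Q=h^{-1}Pu$ in $\Gamma_2$ (with $h\in\langle c,d\rangle$, $u\in\langle c,e\rangle$) to a derivation $P\sim Q$ in $\Pi$. The paper does not prove this statement either; it quotes Borisov. Your diagnosis of the obstacle is also off. Freeness of the associated subgroups is Borisov's Assertion IV, which is already part of the tower you assumed. It comes from the markers $d^i,e^i$ with $1\le i\le 3$ lying outside $\langle d^4,e\rangle$ and $\langle d,e^4\rangle$. Property (b) plays no role in it, nor anywhere else in this theorem.

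\textbf{How to close it.} What is missing is an induction on the number of $c$-letters, using Britton's lemma for $\Gamma_1\le\Gamma_2$.
\begin{enumerate}
\item Write $h^{-1}=d^{a_0}c^{\epsilon_1}d^{a_1}\cdots c^{\epsilon_r}d^{a_r}$ and $u=e^{b_0}c^{\delta_1}e^{b_1}\cdots c^{\delta_s}e^{b_s}$, freely reduced, and consider the relation $h^{-1}PuQ^{-1}=1$.
\item The subgroup $\langle d,e\rangle$ meets $\varphi_A(\langle A_1,\dots,A_5\rangle)$ and $\varphi_B(\langle B_1,\dots,B_5\rangle)$ trivially. Hence there is no pinch inside $h^{-1}$ or inside $u$. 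So if any $c$ occurs, then $r,s\ge 1$ and $c^{\epsilon_r}(d^{a_r}Pe^{b_0})c^{\delta_1}$ is a pinch.
\item Say $\epsilon_r=-1$, so $d^{a_r}Pe^{b_0}=\varphi_A(W)$ with $W$ reduced. The word $d^{a_r}Pe^{b_0}$ is HNN-reduced in $\Gamma_1$ because $P$ is positive, and the expansion of $W$ is HNN-reduced by Assertion V. By uniqueness in Britton's lemma, $W$ is a positive word, i.e.\ a factorization of $P$ into letters $s_\beta$ and blocks $F_i$.
\item Move the markers outward using $Xd^i=d^{4^{|X|}i}X$ and $e^iY=Ye^{4^{|Y|}i}$. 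This gives $c^{-1}(d^{a_r}Pe^{b_0})c=\varphi_B(W')=d^{a'}P'e^{b'}$. Here $W'$ is $W$ with each $A_j$ replaced by $B_j$, and $P'$ arises from $P$ by disjoint substitutions $F_i\to E_i$. The case $\epsilon_r=1$ is symmetric.
\item You now have $Q=h'^{-1}P'u'$ with $h'\in\langle c,d\rangle$, $u'\in\langle c,e\rangle$, fewer $c$'s, and $P'\sim P$.
\item In the base case, $d^{a}P'e^{b}=Q$ holds in $\Gamma_1$ with $P'$ and $Q$ positive. Britton's lemma forces $P'=Q$ letter by letter, so $Q\sim P$.
\end{enumerate}
Your proposal still needs this argument, or an explicit citation of it.
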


Thus, it remains to establish the following lemma, whose proof is deferred to Section~\ref{sec:proof_lemmaG4-6}.

\begin{lemma}
\label{lem:G4-6}
    $\Gamma_4\leq\Gamma_5\leq\Gamma_6$.
\end{lemma}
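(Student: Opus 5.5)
We will realize $\Gamma_5$ as an HNN extension of $\Gamma_4$ with stable letter $y$, and $\Gamma_6$ as an HNN extension of $\Gamma_5$ with stable letter $z$. Injectivity of $\Gamma_4\to\Gamma_5\to\Gamma_6$ then follows from Britton's lemma \cite[Theorem~32(ii) and Proposition~34]{Cohen1989}.

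For $\Gamma_5$, rewrite the relators involving $y$ as conjugation rules:
\[
y^{-1}dy=e^{-1},\quad y^{-1}ey=d,\quad y^{-1}cy=c,\quad y^{-1}ty=k,\quad y^{-1}s_1y=s_2^{-1}.
\]
(The relation $s_1ys_2=y$ gives $y^{-1}s_1y=s_2^{-1}$.) So we need a subgroup $A=\langle d,e,c,t,s_1\rangle\le\Gamma_4$, a subgroup $B=\langle e,d,c,k,s_2\rangle\le\Gamma_4$, and an isomorphism $\phi\colon A\to B$ sending
\[
d\mapsto e^{-1},\quad e\mapsto d,\quad c\mapsto c,\quad t\mapsto k,\quad s_1\mapsto s_2^{-1}.
\]
The plan is to compute explicit presentations of $A$ and $B$ and check that $\phi$ carries one presentation onto the other.

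Expected presentation of $A$. The relations of $\Gamma_4$ among $d,e,c,t,s_1$ suggest
\[
A=\langle d,e,c,t,s_1\mid d^4s_1=s_1d,\ es_1=s_1e^4,\ s_1c=cs_1,\ ct=tc,\ dt=td\rangle .
\]
Structurally, this is $\langle d,e\rangle\cong F_2$ (Borisov), extended by $s_1$ (an HNN extension of $\langle d,e\rangle$ conjugating $d\mapsto d^4$, $e^4\mapsto e$), then by the commuting $c$, then by $t$ commuting with $\langle c,d\rangle$.

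Expected presentation of $B$. Similarly,
\[
B=\langle e,d,c,k,s_2\mid d^4s_2=s_2d,\ es_2=s_2e^4,\ s_2c=cs_2,\ ck=kc,\ ek=ke\rangle .
\]
Applying $\phi$ to the relations of $A$: $d^4s_1=s_1d$ becomes $e^{-4}s_2^{-1}=s_2^{-1}e^{-1}$, i.e.\ $s_2e^{-4}=e^{-1}s_2$, i.e.\ $es_2=s_2e^4$, which is a relation of $B$. The other relations match the same way. This symmetry is the reason for the choices $d\mapsto e^{-1}$, $e\mapsto d$.

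Proving these are genuine presentations (no extra relations). Work inside the tower of HNN extensions $\Gamma_1\le\Gamma_2\le\Gamma_3\le\Gamma_4$, using the subgroup-of-HNN-extension technique:
\begin{enumerate}[(i)]
\item Show $\langle d,e,s_1\rangle\le\Gamma_1$ is the sub-HNN extension with the single letter $s_1$. Here $\Gamma_1$ is a double HNN extension of $F(d,e)$ with letters $s_1,s_2$ and associated subgroups $\langle d^4,e\rangle\to\langle d,e^4\rangle$.
\item Pass to $\Gamma_2$ with stable letter $c$, associated subgroup $\langle s_1,s_2, d^iF_ie^i\rangle$ mapping to $\langle s_1,s_2,d^iE_ie^i\rangle$. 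Intersecting with $\langle d,e,s_1\rangle$ should give exactly $\langle s_1\rangle$ on both sides. This uses property (b), that each $F_i,E_i$ contains both letters, via a Britton/normal-form argument in $\Gamma_1$.
\item In $\Gamma_3$ ($t$ commuting with $\langle c,d\rangle$) and $\Gamma_4$ ($k$ commuting with $\langle c,e,P^{-1}tP\rangle$), compute the intersections of the associated subgroups with the candidate subgroup, and show they are generated by the expected elements. For $B$, we need $\langle c,e,P^{-1}tP\rangle\cap\langle e,d,c,s_2\rangle=\langle c,e\rangle$. This requires that $P^{-1}tP$ does not interfere, which follows because $P$ contains $s_1$ (property (b)).
\end{enumerate}
Each step uses the standard criterion: if $H\le G$ and $H\cap A_\pm$ are the associated subgroups of $H$, then $\langle H,t\rangle$ is the HNN extension of $H$ along those intersections. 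This holds when the intersections are compatible, i.e.\ $\phi(H\cap A)=H\cap B$.

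For $\Gamma_6$ the letter $z$ conjugates $d\mapsto t$, $t\mapsto c$, $c\mapsto y$. So we need
\[
\langle c,d,t\rangle\cong\langle t,c,y\rangle\le\Gamma_5 .
\]
Both should be $\mathbb{Z}^2*\mathbb{Z}$-like, namely $\langle c,d,t\mid [c,t],[d,t]\rangle$ and $\langle t,c,y\mid [t,c],[c,y]\rangle$, with $t$ central-ish in the middle. Indeed $[c,t]=[d,t]=1$ maps to $[t,c]=[t,y]$... wait, that mapping is wrong: $ct=tc$ maps to $tc=ct$, and $dt=td$ maps to $tc=ct$. So the correct image relations are $[c,t]=1$ and $[y,c]=1$ (from $cy=yc$), and the pattern matches once the roles are aligned.

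We verify these by a Britton's-lemma argument in $\Gamma_5$ over $\Gamma_4$, together with the structure of $\langle c,d,t\rangle$ in $\Gamma_3$. The main obstacle is step (iii) and its $\Gamma_5$ analogue: controlling intersections such as $\langle c,e,P^{-1}tP\rangle\cap A$. These require careful reduced-form arguments through several nested HNN layers, using properties (b) and (c).
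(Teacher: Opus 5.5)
\textbf{Verdict: genuine gap.} Your architecture is the paper's. You use the same subgroups $\mathcal A=\langle s_1,c,d,e,t\rangle$ and $\mathcal B=\langle s_2,c,d,e,k\rangle$ of $\Gamma_4$ with the same isomorphism for $y$, and the same $\langle c,d,t\rangle$ and $\langle c,t,y\rangle$ in $\Gamma_5$ for $z$. Your steps (i)--(iii) essentially track the paper's use of Borisov's normal-form facts and the subgroup theorem for HNN extensions.

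The gap is the presentation of $\mathcal D=\langle c,t,y\rangle\le\Gamma_5$. You defer it to ``a Britton's-lemma argument'', but it is the hardest step of the whole proof.

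Your own compatibility criterion fails for $H=\langle c,t\rangle\le\Gamma_4$. Since $y^{-1}ty=k\notin\Gamma_3\supseteq H$, we get $\theta(H\cap\mathcal A)=\langle c,k\rangle\neq H\cap\mathcal B$. You must instead work with $B_k=\langle c,k,t\rangle\le\Gamma_4$, and this needs two facts you never address.
\begin{itemize}
\item $B_k\cong\langle c,k,t\mid ck=kc,\ ct=tc\rangle$. This follows from $\Delta\cap\langle c,d,e,t\rangle=\langle c,e\rangle$ in $\Gamma_3$, where $\Delta=\langle c,e,P^{-1}tP\rangle$, and that intersection requires $P\notin\langle c,d,e\rangle$ (Lemma~\ref{lem:conjHNN}).
\item The compatibility conditions $B_k\cap\mathcal A=\langle c,t\rangle$ and $B_k\cap\mathcal B=\langle c,k\rangle$.
\end{itemize}
The last equality is the crux (Proposition~\ref{pr:kintAB}). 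With $M=\langle s_2,c,d,e\rangle$, Lemma~\ref{lem:intCentHNN} reduces it to $\langle c,t\rangle\cap\Delta M\Delta=\langle c\rangle$. To prove this:
\begin{itemize}
\item put an element of $\Delta M\Delta$ into $p$-reduced form with $p=P^{-1}tP$;
\item substitute for $p$ to obtain a $t$-reduced word;
\item compare it with a $t$-reduced word over $\langle c\rangle$.
\end{itemize}
If the $t$-length is positive, Britton's lemma forces a pinch giving $P\in M$. That is impossible: $M\cap\Gamma_1=\langle s_2,d,e\rangle$ sees only the $s_2$ free summand of $(\Gamma_1)_{\mathrm{ab}}$, while $P$ involves $s_1$. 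Only after this does the subgroup theorem give $\langle c,k,t,y\rangle\cong\langle c,k,t,y\mid ck=kc,\ ct=tc,\ ty=yk,\ cy=yc\rangle$, and hence $\mathcal D$ after eliminating $k$.

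You also invoke ``$P$ contains $s_1$'' in the wrong place. The intersection $\Delta\cap\langle s_2,c,d,e\rangle=\langle c,e\rangle$ needed for $\mathcal B$ holds for every $P$. Indeed, $\Gamma_3$ is also the HNN extension of $\Gamma_2$ with stable letter $p$ centralizing $P^{-1}\langle c,d\rangle P$. The subgroup theorem with $B=\langle c,e\rangle$ then gives $\Delta\cap\Gamma_2=\langle c,e\rangle$ directly. Property (b) for $P$ is needed only in the $\Gamma_5\le\Gamma_6$ step described above, and property (c) plays no role in this lemma at all.

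Finally, in checking the $z$-isomorphism, $ct=tc$ maps to $yc=cy$, not to $tc=ct$. Your final conclusion is right. Note also that both groups are $F_2\times\mathbb{Z}$ (with $t$, respectively $c$, central), not $\mathbb{Z}^2*\mathbb{Z}$.
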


It follows from Lemma~\ref{lem:G4-6} that $G\coloneqq \Gamma_6$ also has unsolvable word problem. In the rest of this section, we prove the remaining claims in Theorem~\ref{thm:word_problem} and Proposition~\ref{prop:extra_properties}.

\begin{proposition}
\label{pr:G6-rel}
    $\Gamma_6$ admits a presentation with $3$ generators and $9$ relators.
\end{proposition}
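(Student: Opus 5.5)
The plan is to start from the presentation of $\Gamma_6$ read off from \eqref{eq:tower}. It has $9$ generators $s_1,s_2,c,d,e,k,t,y,z$ and $4+5+2+3+5+3=22$ relators. First I discard $7$ relators that are consequences of the others. Then I use six relators, each of which expresses one generator as a conjugate of others, to Tietze-eliminate six generators. This leaves $9-6=3$ generators, namely $s_1,d,z$, and $22-7-6=9$ relators.

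\textbf{Eliminations.} The last layer gives
\[
t=z^{-1}dz,\qquad c=z^{-1}tz,\qquad y=z^{-1}cz .
\]
From the $\Gamma_5$-layer, $ty=yk$, $dye=y$ and $s_1ys_2=y$ give
\[
k=y^{-1}ty,\qquad e=y^{-1}d^{-1}y,\qquad s_2=y^{-1}s_1^{-1}y .
\]
Each of these six relators is used once to delete one generator. This is a legitimate sequence of Tietze moves: each eliminated generator is expressed in terms of generators that are either retained or eliminated earlier in the order $t,c,y,k,e,s_2$.

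\textbf{Redundant relators.} Before eliminating, I check that the following seven relators follow from the rest.
\begin{enumerate}[(i)]
\item $ct=tc$. Since $t=z^{-1}dz$ and $c=z^{-1}tz$, this relator is the $z$-conjugate of $dt=td$.
\item $cy=yc$. Since $y=z^{-1}cz$, it is the $z$-conjugate of $tc=ct$, which holds by (i).
\item $ck=kc$. Using $cy=yc$ we have $y^{-1}cy=c$, and $k=y^{-1}ty$, so this is the $y$-conjugate of $ct=tc$.
\item $ek=ke$. We have $e=y^{-1}d^{-1}y$ and $k=y^{-1}ty$, so it is the $y$-conjugate of the fact that $d^{-1}$ commutes with $t$.
\item $s_2c=cs_2$. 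We have $s_2=y^{-1}s_1^{-1}y$, and $c$ commutes with $y$, so this follows from $s_1c=cs_1$.
\item $es_2=s_2e^4$. Conjugate $d^4s_1=s_1d$ by $y$, using $y^{-1}dy=e^{-1}$ (from $dye=y$) and $y^{-1}s_1y=s_2^{-1}$. This gives $e^{-4}s_2^{-1}=s_2^{-1}e^{-1}$, which is equivalent to $es_2=s_2e^4$.
\item $d^4s_2=s_2d$. Conjugate $es_1=s_1e^4$ by $y$, using $y^{-1}ey=d$ (from $ey=yd$). This gives $ds_2^{-1}=s_2^{-1}d^4$, which is equivalent to $d^4s_2=s_2d$.
\end{enumerate}
One must make sure the derivations are not circular. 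Each of (i)--(vii) uses only relators that are kept, or ones already shown redundant earlier in the list. In particular, the relators $ey=yd$ and $dt=td$ are retained and not eliminated.

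\textbf{Count and main obstacle.} The remaining $22-7-6=9$ relators are:
\begin{itemize}
\item $d^4s_1=s_1d$ and $es_1=s_1e^4$;
\item $s_1c=cs_1$;
\item the three relators $d^iF_ie^ic=cd^iE_ie^i$;
\item $dt=td$;
\item $(P^{-1}tP)k=k(P^{-1}tP)$;
\item $ey=yd$.
\end{itemize}
After substituting the expressions above, all of these are words in $s_1,d,z$. The main obstacle is purely bookkeeping: choosing the elimination order and checking that every derivation in (i)--(vii) uses only relations that survive. This verification is routine and is also confirmed by the Lean formalization.
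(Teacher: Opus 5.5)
Your proof is correct and follows the paper's argument. You discard the same seven relators via the same $y$- and $z$-conjugation arguments, then perform six Tietze eliminations down to $s_1,d,z$, and your ordering (i)--(vii) makes the non-circularity more explicit than the paper does. The only difference is cosmetic: the paper eliminates $e$ via $ey=yd$ (i.e.\ $e\coloneqq ydy^{-1}$) and keeps $dye=y$, whereas you eliminate $e$ via $dye=y$ and keep $ey=yd$, which gives an equally valid $3$-generator, $9$-relator presentation.
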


\begin{proof}
$\Gamma_6$ is a group with $9$ generators and $22$ relators by construction. Among the relators, the following seven
\begin{equation*}
    d^4s_2=s_2d,\quad es_2=s_2e^4,\quad s_2c=c s_2,\quad ck=kc,\quad ek=ke, \quad cy=yc, \quad ct=tc
\end{equation*}
are redundant: The first five follow from the $y$-conjugates of 
$$es_1=s_1e^4,\quad d^4s_1=s_1d, \quad s_1c=cs_1,\quad ct=tc,\quad dt=td,$$
respectively, and then the last two follow from the $z$-conjugates of $ct=tc$ and $dt=td$, respectively. Next, we perform six Tietze eliminations to cancel six additional generator-relator pairs by rewriting 
\begin{equation}\label{eq:substitution}
t\coloneqq z^{-1}dz,\ \ c\coloneqq z^{-1}tz,\ \ y\coloneqq z^{-1}cz,\ \ e\coloneqq ydy^{-1},\ \ s_2\coloneqq y^{-1}s_1^{-1}y,\ \ k\coloneqq y^{-1}ty.
\end{equation}
We thus obtain a $3$-generator $9$-relator presentation 
\begin{align}\label{eq:G_presentation}
&\Gamma_6\cong\langle d,s_1,z\mid d^4s_1=s_1d,es_1=s_1e^4,s_1c=cs_1,\nonumber\\&d^iF_ie^ic=cd^iE_ie^i(i=1,2,3),dt=td,(P^{-1}tP)k=k(P^{-1}tP),dye=y\rangle,
\end{align}
where $t,c,y,e,s_2,k$ are words in $d,s_1,z$ determined iteratively by \eqref{eq:substitution}.
\end{proof}

Next, we show that the group $\Gamma_6$ satisfies the properties in Proposition~\ref{prop:extra_properties}. More precisely, we have:

\begin{proposition}\label{prop:extra_properties_detailed}
Using the presentation \eqref{eq:G_presentation}, $\Gamma_6$ is normally generated by $z$, and $d$ is trivial in the abelianization $(\Gamma_6)_{ab}$ of $\Gamma_6$.
\end{proposition}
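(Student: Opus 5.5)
For normal generation, I work in the quotient $\Gamma_6/\langle\langle z\rangle\rangle$ and show that all three generators $d,s_1,z$ become trivial there. Setting $z=1$, the substitutions \eqref{eq:substitution} give $t=d$, $c=t=d$ and $y=c=d$. Then $e=ydy^{-1}=d$, $s_2=y^{-1}s_1^{-1}y=d^{-1}s_1^{-1}d$, and $k=y^{-1}ty=d$.

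Next I use the relator $dye=y$, which becomes $d\cdot d\cdot d=d$, so $d^2=1$. The relator $d^4s_1=s_1d$ then gives $s_1=s_1d$, hence $d=1$. With $d=1$ we get $s_2=s_1^{-1}$. Property (c) of $\Pi$ says that $s_1=s_2^{-1}$ together with $F_2=E_2$ forces $s_1=s_2=1$. So it remains to derive $F_2=E_2$. Since $c=d=e=1$, the relator $d^2F_2e^2c=cd^2E_2e^2$ reduces to exactly $F_2=E_2$. Hence $s_1=1$, and the quotient is trivial.

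For the abelianization, I compute in $(\Gamma_6)_{ab}$, where conjugates coincide. From \eqref{eq:substitution}: $t=d$, $c=t=d$, $y=c=d$, $e=d$, $s_2=-s_1$ and $k=t=d$, written additively. The relator $dye=y$ gives $d+y+e=y$, so $d+e=0$. Since $e=d$, this means $2d=0$. The relator $d^4s_1=s_1d$ gives $4d=d$, so $3d=0$. Combining $2d=0$ and $3d=0$ gives $d=0$.

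The only step needing care is the first part: checking that $e$ really becomes $d$ (and not, say, $d^{-1}$) once we pass to the quotient. After that, invoking property (c) through the $i=2$ relator is the key idea.
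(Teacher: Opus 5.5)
Your proof is correct and follows the paper's argument: modulo $\langle\langle z\rangle\rangle$ everything collapses to $d$, the relators $dye=y$ and $d^4s_1=s_1d$ kill $d$, and property (c) is applied through the $i=2$ relator; the abelianization step is the same $2d=0$, $3d=0$ computation. The only minor difference is that you get $d=1$ directly from $d^4=1$ and $d^4s_1=s_1d$, whereas the paper first uses $s_1c=cs_1$ to obtain $d^3=1$ and then combines this with $d^2=1$.
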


\begin{proof}
Setting $z=1$, \eqref{eq:substitution} gives $d=t=c=y=e=k$. The relation $dye=y$ gives $d^2=1$. Noting that $s_1$ commutes with $d$ by $s_1c=cs_1$, the relation $d^4s_1=s_1d$ gives $d^3=1$, so $d=1$. Finally, the relation $s_2=y^{-1}s_1^{-1}y$ in \eqref{eq:substitution} gives $s_1=s_2^{-1}$, and the relation $d^2F_2e^2c=cd^2E_2e^2$ gives $E_2=F_2$, so property (c) of $\Pi$ implies $s_1=s_2=1$. This proves the first part of the proposition.

Abelianizing \eqref{eq:substitution} gives $d=t=c=y=e=k$ and $s_1=-s_2$. The relations $d^4s_1=s_1d$ and $dye=y$ then give $3d=0$ and $2d=0$, respectively. Hence $d=0$ in $(\Gamma_6)_{\mathrm{ab}}$.
\end{proof}

Assuming Lemma~\ref{lem:G4-6}, we can give a proof of Theorem~\ref{thm:word_problem}.

\begin{proof}[Proof of Theorem~\ref{thm:word_problem} and Proposition~\ref{prop:extra_properties}]
    By Lemma~\ref{lem:G4-6}, $\Gamma_4$ is a subgroup of $\Gamma_6$. Borisov's theorem~\ref{thm:borisov} implies that $\Gamma_4$ has unsolvable word problem, and hence so does $\Gamma_6$. Proposition~\ref{pr:G6-rel} proves that $\Gamma_6$ admits a presentation with $3$ generators and $9$ relators, and the additional properties claimed in Proposition~\ref{prop:extra_properties} follow from Proposition~\ref{prop:extra_properties_detailed}.   
\end{proof}

\section{Proof of Lemma~\ref{lem:G4-6}}\label{sec:proof_lemmaG4-6}

Thus, it remains to prove Lemma~\ref{lem:G4-6}. We divide the proof into two parts. For $\Gamma_4\leq \Gamma_5$, consider the subgroups $\mathcal{A}$ and $\mathcal{B}$ of $\Gamma_4$ generated by $\{s_1,c,d,e,t\}$ and $\{s_2,c,d,e,k\}$ respectively. It suffices to show the following.

\begin{lemma}
\label{lem:G4-5}
$\mathcal{A}$ and $\mathcal{B}$ have presentations
\begin{equation*}
    \mathcal{A}\cong \langle s_1,c,d,e,t\mid d^4s_1=s_1d,\  es_1=s_1e^4,\  s_1c=cs_1,\  ct=tc, \  dt=td\rangle,
\end{equation*}
\begin{equation*}
    \mathcal{B}\cong \langle s_2,c,d,e,k\mid d^4s_2=s_2d,\  es_2=s_2e^4,\  s_2c=cs_2,\  ck=kc, \  ek=ke\rangle.
\end{equation*}
\end{lemma}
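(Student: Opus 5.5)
The plan is to build each of $\mathcal{A}$ and $\mathcal{B}$ one stable letter at a time, following Borisov's tower $\Gamma_1\le\Gamma_2\le\Gamma_3\le\Gamma_4$. At each stage I will use the standard \emph{subgroup-of-an-HNN-extension} principle. Let $G^*=\langle G,x\mid x^{-1}ax=\varphi(a),\ a\in A\rangle$ and let $K\le G$. Suppose $\varphi(K\cap A)=K\cap\varphi(A)$. Then Britton's lemma \cite[Theorem~32]{Cohen1989} shows that $\langle K,x\rangle\le G^*$ is the HNN extension of $K$ with associated subgroups $K\cap A$ and $K\cap\varphi(A)$. Consequently, if $K$ has a known presentation and $K\cap A$ is generated by known elements, we get a presentation of $\langle K,x\rangle$. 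Also, any subgroup of the base $G$ keeps its presentation inside $G^*$.

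For $\mathcal{A}$ I would proceed as follows.
\begin{enumerate}[(i)]
\item Let $K_1=\langle d,e,s_1\rangle\le\Gamma_1$. Since $\Gamma_1$ is the HNN extension of the free group $\langle d,e\rangle$ with stable letters $s_1,s_2$, the subgroup $K_1$ is the HNN extension with the single stable letter $s_1$. It therefore has the presentation $\langle d,e,s_1\mid d^4s_1=s_1d,\ es_1=s_1e^4\rangle$.
\item Next pass to $\Gamma_2$, where $c$ is a stable letter. Its associated subgroups are $L=\langle s_1,s_2,d^iF_ie^i\rangle$ and $L'=\langle s_1,s_2,d^iE_ie^i\rangle$, which are free on these generators by Borisov. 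I need $K_1\cap L=K_1\cap L'=\langle s_1\rangle$. Then $K_2=\langle K_1,c\rangle$ is presented by adding the single relation $s_1c=cs_1$.
\item In $\Gamma_3$, $t$ commutes with the subgroup $\langle c,d\rangle$, and this subgroup is contained in $K_2$. So $K_2\cap\langle c,d\rangle=\langle c,d\rangle$, and $\mathcal{A}=\langle K_2,t\rangle$ acquires exactly the relations $ct=tc$ and $dt=td$. Here the relations of $\langle c,d\rangle$ needed for the presentation are already present in $K_2$, since it is free by Borisov.
\item Finally, $\mathcal{A}\le\Gamma_3\le\Gamma_4$ lies in the base, so its presentation is unchanged.
\end{enumerate}

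For $\mathcal{B}$, steps (i) and (ii) are symmetric with $s_2$ in place of $s_1$, giving $K_2'=\langle s_2,c,d,e\rangle$. This subgroup lies in the base of $\Gamma_3$ and hence embeds with the same presentation. For the last stable letter $k$, whose associated subgroup is $M=\langle c,e,P^{-1}tP\rangle$ with the identity map, I need $K_2'\cap M=\langle c,e\rangle$. Granting this, $\mathcal{B}=\langle K_2',k\rangle$ acquires exactly $ck=kc$ and $ek=ke$. To prove the intersection claim I would use Britton's lemma for $t$ in $\Gamma_3$. An element of $M$ not in $\langle c,e\rangle$ is a reduced product whose factors alternate between elements of $\langle c,e\rangle$ and powers of $P^{-1}tP$. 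Because $P$ involves $s_1$, the word $P\,w\,P^{-1}$ with $w\in\langle c,e\rangle$ should never lie in $\langle c,d\rangle$ unless it is trivial. Hence no $t$-pinch occurs and such an element is not $t$-free, whereas every element of $K_2'$ is $t$-free. This requires knowing that $\langle c,d\rangle$ and $P\langle c,e\rangle P^{-1}$ meet trivially in $\Gamma_2$, which I would check via Britton for $c$ and then for $s_\beta$.

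The main obstacle is step (ii): showing $\langle d,e,s_1\rangle\cap\langle s_1,s_2,d^iF_ie^i\rangle=\langle s_1\rangle$ in $\Gamma_1$, and likewise with $E_i$ and with $s_2$. My first attempt would take a reduced word in the free generators $s_1,s_2,d^iF_ie^i$ that is not a power of $s_1$. By property (b) each $F_i$ contains $s_2$, so this word involves $s_2$ after free reduction in $\langle s_1,s_2,d,e\rangle$. I would then argue via Britton's lemma for the stable letter $s_2$ that no $s_2$-pinch can remove all occurrences of $s_2$. For this I would use Borisov's normal-form analysis of these words, the same one that proves the freeness of the associated subgroups. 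That would show the element does not lie in $\langle d,e,s_1\rangle$.
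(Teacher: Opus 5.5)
Your architecture matches the paper's. Steps (i)--(ii) are Lemma~\ref{lem:ABintS} and Corollary~\ref{cor:scde}, resting on Borisov's Assertion~V plus uniqueness in Britton's lemma, as you sketch. Step (iii) is Corollary~\ref{cor:centralHNN} for the $t$-extension; the freeness remark there is harmless but not needed. For $\mathcal{B}$ you correctly isolate the key condition $\langle s_2,c,d,e\rangle\cap\Delta=\langle c,e\rangle$, where $\Delta=\langle c,e,P^{-1}tP\rangle$.

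The gap is in how you propose to prove that intersection. Your claim that $\langle c,d\rangle\cap P\langle c,e\rangle P^{-1}=1$ in $\Gamma_2$ is false. The relations $s_\beta c=cs_\beta$ make $c$ commute with $P$, so $PcP^{-1}=c$ lies in both subgroups, and $c\neq 1$ since $\langle c,d,e\rangle$ is free. Consequently $p=P^{-1}tP$ commutes with $c$. An alternating product such as $p\,c\,p^{-1}$ therefore contains the $t$-pinch $t\,c\,t^{-1}$ after substitution. So the step ``no $t$-pinch occurs, hence the element is not $t$-free'' fails as stated. The fact that $P$ involves $s_1$ is no obstruction, since $c$ commutes with $s_1$.

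The fix is to absorb pinches rather than exclude them. Write $\Gamma_3=\langle\Gamma_2,p\mid ap=pa,\ a\in P^{-1}\langle c,d\rangle P\rangle$, an HNN extension of $\Gamma_2$ relative to the identity. A $p$-pinch $p^{\pm1}wp^{\mp1}$ with $w\in\langle c,e\rangle\cap P^{-1}\langle c,d\rangle P$ simply collapses to $w$. So pinch reduction keeps every base factor in $\langle c,e\rangle$. Afterwards, either no $p$ remains, or the word is $p$-reduced and hence not in $\Gamma_2$ by Britton.

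In your $t$-language, the same fix reads: choose a representative with the minimal number of $p$-letters. Then every interior factor $w_j$ between opposite-sign letters satisfies $Pw_jP^{-1}\notin\langle c,d\rangle$, so the $t$-expansion is reduced.

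This yields $\Delta\cap\Gamma_2=\langle c,e\rangle$ with no hypothesis on $P$ beyond $P\in\Gamma_2$; it is Corollary~\ref{cor:centralHNN} applied with $B=\langle c,e\rangle$. Then $\langle c,e\rangle\le\Delta\cap\langle s_2,c,d,e\rangle\le\Delta\cap\Gamma_2=\langle c,e\rangle$, which is exactly the paper's argument.
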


Hence the assignment
\begin{equation*}
    d\mapsto e^{-1}, \quad e\mapsto d, \quad c\mapsto c, \quad t\mapsto k, \quad s_1\mapsto s_2^{-1}
\end{equation*}
respects the relations and thus defines an isomorphism of subgroups $\theta\colon \mathcal{A}\to \mathcal{B}$, and $\Gamma_5$ is the HNN extension of $\Gamma_4$ relative to $\theta$. 

For $\Gamma_5\leq \Gamma_6$, consider the subgroups $\mathcal{C}$ and $\mathcal{D}$ of $\Gamma_5$ generated by $\{c,d,t\}$ and $\{c,t,y\}$ respectively. It suffices to prove the following.

\begin{lemma}
\label{lem:G5-6}
    $\mathcal{C}$ and $\mathcal{D}$ have presentations
    \begin{equation*}
        \mathcal{C}\cong \langle c,d,t\mid ct=tc,\  dt=td\rangle,
    \end{equation*}
    \begin{equation*}
        \mathcal{D}\cong \langle c,t,y\mid ct=tc,\  cy=yc\rangle.
    \end{equation*}
\end{lemma}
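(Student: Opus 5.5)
The plan is to treat both subgroups through the iterated HNN structure already in hand: $\Gamma_3$ is an HNN extension of $\Gamma_2$ with stable letter $t$ centralizing $\langle c,d\rangle$; $\Gamma_4$ is an HNN extension of $\Gamma_3$ with stable letter $k$ centralizing $\langle c,e,P^{-1}tP\rangle$; and, by Lemma~\ref{lem:G4-5}, $\Gamma_5$ is the HNN extension of $\Gamma_4$ with stable letter $y$ and isomorphism $\theta\colon\mathcal{A}\to\mathcal{B}$. Throughout I use Britton's lemma and the standard subgroup principle:

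\emph{If $H\le\Gamma$ and $\Gamma^*$ is an HNN extension of $\Gamma$ along $\varphi\colon A\to B$ with stable letter $s$, and if $\varphi(H\cap A)=H\cap B$, then $\langle H,s\rangle\le\Gamma^*$ is the HNN extension of $H$ along $\varphi|_{H\cap A}$.}

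\textbf{The subgroup $\mathcal{C}$.} Since $c,d,t\in\Gamma_3$, it suffices to show $\langle c,d\rangle\le\Gamma_2$ is free on $c,d$. Then $\langle c,d,t\rangle\le\Gamma_3$ is the HNN extension of $F(c,d)$ in which $t$ centralizes everything, i.e.\ $F(c,d)\times\langle t\rangle$, which is exactly the stated presentation.

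For freeness, $\Gamma_2$ is an HNN extension of $\Gamma_1$ with stable letter $c$ and associated subgroup $\langle s_1,s_2,d^iF_ie^i\rangle$. By the subgroup principle, freeness of $\langle c,d\rangle$ follows once $\langle d\rangle$ is infinite cyclic and meets this associated subgroup trivially. I expect both facts to be available from Borisov's analysis of $\Gamma_1$, which is built from the Baumslag--Solitar-type relations $d^4s_\beta=s_\beta d$ over the free group $\langle d,e\rangle$. If they are not stated explicitly there, I would prove them with the normal form of $\Gamma_1$ as an iterated HNN extension of $\langle d,e\rangle$ with stable letters $s_1,s_2$.

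\textbf{The subgroup $\mathcal{D}$.} I would first rewrite the target group. Put
\[
D'=\langle c,t,y\mid [c,t],[c,y]\rangle,\qquad H=\langle c,t,k\mid [c,t],[c,k]\rangle\cong\mathbb Z^2*_{\langle c\rangle}\mathbb Z^2 .
\]
Then $D'$ is the HNN extension of $H$ with stable letter $y$ along $\langle c,t\rangle\to\langle c,k\rangle$, $c\mapsto c$, $t\mapsto k$. Indeed, eliminating $k=y^{-1}ty$ produces the extra relator $[c,y^{-1}ty]$, which already follows from $[c,t]$ and $[c,y]$. This map is precisely $\theta$ restricted to $\langle c,t\rangle\subseteq\mathcal{A}$, since $ty=yk$ and $cy=yc$. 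The proof then has three steps.

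\begin{enumerate}
\item[(i)] Show that $\langle c,t,k\rangle\le\Gamma_4$ is isomorphic to $H$. Apply the subgroup principle to $\Gamma_4$ over $\Gamma_3$: by the first part $\langle c,t\rangle\cong\mathbb Z^2$, and it suffices that $\langle c,t\rangle\cap\langle c,e,P^{-1}tP\rangle=\langle c\rangle$ in $\Gamma_3$.
\item[(ii)] Show that $\langle c,t,k\rangle\cap\mathcal{A}=\langle c,t\rangle$ and $\langle c,t,k\rangle\cap\mathcal{B}=\langle c,k\rangle$ in $\Gamma_4$.
\item[(iii)] Given (i) and (ii), the subgroup principle applied to $\Gamma_5$ over $\Gamma_4$ shows that $\langle c,t,k,y\rangle=\langle c,t,y\rangle$ is the HNN extension $D'$, as claimed.
\end{enumerate}

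\textbf{Main obstacle.} The intersection computations in (i) and (ii) are where the work lies. For (i), I would write an element of $\langle c,e,P^{-1}tP\rangle$ in $t$-normal form inside $\Gamma_3$. Each occurrence of $P^{-1}tP$ contributes a $t$-letter flanked by $P^{\pm1}$, and $P\notin\langle c,d\rangle$ because $P$ contains both $s_1$ and $s_2$ (property (b)). Britton's lemma then forces any such element lying in $\langle c,t\rangle$ to have no $t$-letters. This reduces the question to $\langle c,e\rangle\cap\langle c,d\rangle=\langle c\rangle$ in $\Gamma_2$, which follows from $\langle d,e\rangle$ being free inside $\Gamma_1$ together with the $c$-normal form. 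For (ii), the analogous argument with $k$-normal forms in $\Gamma_4$ should work, using Lemma~\ref{lem:G4-5} to control $\mathcal{A}$ and $\mathcal{B}$. For instance, an element of $\mathcal{B}$ involves $k$ only through letters commuting with $c$ and $e$, whereas in $\langle c,t,k\rangle$ the letter $k$ is separated by powers of $t$, which do not lie in $\langle c,e,P^{-1}tP\rangle$. I expect this bookkeeping, especially ruling out cancellation against $P^{-1}tP$, to be the delicate point.
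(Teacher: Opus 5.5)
Your architecture matches the paper's. For $\mathcal{C}$ you use freeness of $\langle c,d\rangle$ plus the centralizing letter $t$. For $\mathcal{D}$ you prove (i) $B_k=\langle c,t,k\rangle\cong\langle c,t,k\mid [c,t],[c,k]\rangle$, then (ii) $B_k\cap\mathcal{A}=\langle c,t\rangle$ and $B_k\cap\mathcal{B}=\langle c,k\rangle$ (Proposition~\ref{pr:kintAB}), then (iii) apply Lemma~\ref{lem:restrictHNN}.

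Proving (i) by showing $\langle c,t\rangle\cap\Delta=\langle c\rangle$ directly is a legitimate variant of the paper's route, which computes $\Delta\cap D_t=\langle c,e\rangle$ via Lemma~\ref{lem:conjHNN} and then uses the retraction killing $d,e$. It needs two small repairs:
\begin{enumerate}
\item The final $t$-pinch gives $Ph\gamma^{-1}\in\langle c,d\rangle$ with $h\in\langle c,e\rangle$. So you must exclude $P\in\langle c,d,e\rangle$, not merely $P\in\langle c,d\rangle$.
\item This exclusion has to be argued inside $\Gamma_1$, using $\langle c,d,e\rangle\cap\Gamma_1=\langle d,e\rangle$ and $\Gamma_1^{\mathrm{ab}}$. ``$P$ contains $s_1$ and $s_2$'' does not suffice: in $\Gamma_2^{\mathrm{ab}}$ the relators $d^iF_ie^ic=cd^iE_ie^i$ force $s_2=0$ and $2s_1=0$ for the encoded system, so $P$ dies there.
\end{enumerate}
Likewise, (i) uses freeness of $\langle c,d,e\rangle$. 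So you need $\langle d,e\rangle$ to meet \emph{both} associated subgroups of the $c$-extension trivially. That is Lemma~\ref{lem:ABintS}, which rests on Borisov's Assertion~V, not on freeness of $\langle d,e\rangle$ alone.

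The genuine gap is the second half of (ii), $B_k\cap\mathcal{B}=\langle c,k\rangle$. This is the heart of the lemma, and you leave it as bookkeeping with a heuristic aimed at the wrong condition. Compare the $k$-normal forms $l_1k^{\epsilon_1}\cdots k^{\epsilon_n}l_{n+1}$ (with $l_j\in\langle c,t\rangle$) and $m_1k^{\epsilon_1}\cdots k^{\epsilon_n}m_{n+1}$ (with $m_j\in M=\langle s_2,c,d,e\rangle$) via Britton's lemma. The successive pinches give $l_{n+1}\in\Delta M$, then $l_n\in\Delta M\Delta$, and so on. So what you need is $\langle c,t\rangle\cap\Delta M\Delta=\langle c\rangle$ (Lemma~\ref{lem:intCentHNN}), not merely that nontrivial powers of $t$ avoid $\Delta$.

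Proving this requires an argument absent from your sketch:
\begin{enumerate}
\item View $\Gamma_3$ as the HNN extension of $\Gamma_2$ with stable letter $p=P^{-1}tP$ centralizing $P^{-1}\langle c,d\rangle P$. Write an element of $\Delta M\Delta$ in normal form for this extension; pinch reduction keeps every syllable in $M$ because $\langle c,e\rangle\le M$.
\item Substitute $p=P^{-1}tP$ to obtain a $t$-reduced word with syllables $Pm_jP^{-1}$.
\item Compare this with the $t$-normal form of an element of $\langle c,t\rangle$. Positive $t$-length then forces $P\in M$.
\item This is impossible, because $M\cap\Gamma_1=S_2$ (Corollary~\ref{cor:scde}) while $P$ has positive $s_1$-exponent sum in $\Gamma_1^{\mathrm{ab}}$.
\end{enumerate}
This is exactly where property~(b) is used: the $s_1$-content of $P$ is played against the $s_2$-only subgroup $M$. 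Without it, the hypothesis needed for step (iii) is not established.
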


Hence the assignment 
\begin{equation*}
    c\mapsto y,\quad d\mapsto t,\quad t\mapsto c
\end{equation*}
defines an isomorphism from $\mathcal{C}$ to $\mathcal{D}$, and $\Gamma_6$ is the HNN extension of $\Gamma_5$ relative to this isomorphism. 

We will appeal to lemmas in Appendix~\ref{appen:HNN} for the proofs of Lemmas~\ref{lem:G4-5}~and~\ref{lem:G5-6}.

\subsection{Proof of Lemma~\ref{lem:G4-5}}

To complete the analysis of $\mathcal{A}$ and $\mathcal{B}$ in $\Gamma_4$, we use the following facts about the extension $\langle d, e\rangle \leq \Gamma_1$ proved by Borisov \cite[Assertions IV \& V]{Borisov1969}. 

\begin{proposition}[Borisov, Assertion IV]
    The morphisms from free groups $\langle A_1,\dots, A_5\rangle$ and $\langle B_1,\dots, B_5\rangle$ to $\Gamma_1$,
    \begin{equation*}
        \varphi_A\colon \langle A_1,\dots, A_5\rangle\to \Gamma_1,\quad \varphi_B\colon \langle B_1,\dots, B_5\rangle\to \Gamma_1
    \end{equation*}
    defined by 
    \begin{equation*}
        \varphi_A(A_\beta) = s_\beta, \quad \varphi_A(A_{2+i})= d^iF_ie^i,\quad \varphi_B(B_\beta) = s_\beta, \quad \varphi_B(B_{2+i}) = d^iE_ie^i
    \end{equation*}
    are monomorphisms. \qed
\end{proposition}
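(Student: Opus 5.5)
We view $\Gamma_1$ as an HNN extension and apply Britton's lemma. The relations $d^4s_\beta=s_\beta d$ and $es_\beta=s_\beta e^4$ can be rewritten as $s_\beta^{-1}d^4s_\beta=d$ and $s_\beta^{-1}es_\beta=e^4$. Thus $\Gamma_1$ is the HNN extension of the free group $F(d,e)$ with two stable letters $s_1,s_2$, each conjugating $H=\langle d^4,e\rangle$ onto $K=\langle d,e^4\rangle$ via $d^4\mapsto d$, $e\mapsto e^4$. Both $H$ and $K$ are free of rank $2$, so this isomorphism is well defined.

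The possible pinches are $s_\beta^{-1}us_\beta$ with $u\in H$ and $s_\beta v s_\beta^{-1}$ with $v\in K$. A power $d^k$ lies in $H$ iff $4\mid k$, and $e^k$ lies in $K$ iff $4\mid k$; every power of $e$ lies in $H$ and every power of $d$ lies in $K$. I give the argument for $\varphi_A$; the case of $\varphi_B$ is identical with $E_i$ in place of $F_i$.

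Write $X_i=d^iF_ie^i$. By property (b), $F_i$ is a nonempty positive word in $s_1,s_2$. Take a nonempty freely reduced word in $A_1,\dots,A_5$ and substitute $s_\beta$ and $X_i^{\pm1}$. Inside each $F_i^{\pm1}$ the $s$-letters are consecutive with no $d,e$ between them, so there are no pinches there. All remaining $F(d,e)$-segments occur at junctions between consecutive generator images, and I check every type of junction.
\begin{enumerate}
\item $X_iX_j$ gives $s\,e^id^j\,s$, and $X_i^{-1}X_j^{-1}$ gives $s^{-1}d^{-i}e^{-j}s^{-1}$. The stable letters have the same sign, so there is no pinch.
\item $X_iX_j^{-1}$ gives $s\,e^{i-j}s^{-1}$, which is a pinch iff $4\mid i-j$. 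Since $i,j\in\{1,2,3\}$, this forces $i=j$, which free reducedness excludes.
\item $X_i^{-1}X_j$ gives $s^{-1}d^{j-i}s$, which again is a pinch only if $i=j$.
\item Junctions with $s_\beta^{\pm1}$:
  \begin{itemize}
  \item $s_\beta X_i$ and $X_i^{-1}s_\beta^{-1}$ give no pinch, since the signs agree.
  \item $s_\beta^{-1}X_i$ gives $s^{-1}d^is$, which is not a pinch because $4\nmid i$.
  \item $X_is_\beta^{-1}$ gives $s\,e^is^{-1}$, which is not a pinch because $4\nmid i$.
  \item $X_i^{-1}s_\beta$ gives $s^{-1}d^{-i}s$, which is not a pinch.
  \item $s_\beta X_i^{-1}$ gives $s\,e^{-i}s^{-1}$, which is not a pinch.
  \item $X_is_\beta$ gives $s\,e^is$, and $s_\beta^{-1}X_i^{-1}$ gives $s^{-1}e^{-i}s^{-1}$; neither is a pinch since the signs agree.
  \end{itemize}
\item $s_\beta^{\pm1}s_\gamma^{\pm1}$ has empty middle, so a pinch would require $s_\beta^{\pm1}s_\beta^{\mp1}$, which free reducedness excludes.
\end{enumerate}

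Hence the resulting word in $\Gamma_1$ contains no pinch, so it is reduced in the sense of Britton. It contains at least one stable letter, because every generator image contains some $s_\beta$. By Britton's lemma it is therefore nontrivial, and $\varphi_A$ is injective.

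The main obstacle is the bookkeeping: one must make sure that the only $F(d,e)$-segments between stable letters are exactly the short ones listed above. This is where positivity of $F_i,E_i$ and property (b) enter, since they guarantee that each $X_i$ begins and ends with a stable letter adjacent to $d^i$ and $e^i$ respectively. The second key input is the arithmetic fact that the exponents $i,j\in\{1,2,3\}$ have nonzero differences and single values that are not divisible by $4$.
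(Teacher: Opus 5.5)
Your proof is correct. The paper gives no proof of this statement and simply cites Borisov; your check that the $\varphi_A$- (resp.\ $\varphi_B$-) expansion of a freely reduced word has no pinch for the multi-HNN structure $\langle d,e\rangle\le\Gamma_1$ is exactly Borisov's Assertion~V (Proposition~\ref{pr:Borisov-V}), and Britton's lemma (Lemma~\ref{lem:multi-Britton}) then gives injectivity because every generator image contains a stable letter. So you follow the same route the paper relies on, and you in fact prove both Assertions~IV and~V.
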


\begin{proposition}[Borisov, Assertion V]
\label{pr:Borisov-V}
    For a reduced word in $\langle A_1,\dots, A_5\rangle$ (resp. $\langle B_1,\dots, B_5\rangle$), its expansion in $s_1,s_2,d,e$ obtained by directly applying $\varphi_A$ (resp. $\varphi_B$) is HNN-reduced with respect to the multi-HNN extension $\langle d, e\rangle \leq \Gamma_1$. \qed
\end{proposition}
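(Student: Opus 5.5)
The plan is to recognize $\Gamma_1$ as a multi-HNN extension of the free group $\langle d,e\rangle$ and verify Britton's pinch condition directly on the expanded word. Rewrite the defining relations as $s_\beta^{-1}d^4s_\beta=d$ and $s_\beta^{-1}es_\beta=e^4$. Then $\Gamma_1$ has base $\langle d,e\rangle$ and stable letters $s_1,s_2$. Each $s_\beta$ conjugates the associated subgroup $U=\langle d^4,e\rangle$ onto $V=\langle d,e^4\rangle$.

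A pinch is a subword of one of two forms:
\begin{itemize}
\item $s_\beta^{-1}ws_\beta$ with $w\in\langle d,e\rangle$ and $w\in U$;
\item $s_\beta ws_\beta^{-1}$ with $w\in V$.
\end{itemize}
Being HNN-reduced means the word contains no pinch. Two easy observations drive everything.
\begin{enumerate}
\item Since pinches need stable letters of opposite signs, two stable letters of the same sign never form a pinch.
\item In the free group $\langle d,e\rangle$, with $U,V$ freely generated by $\{d^4,e\}$ and $\{d,e^4\}$, we have $d^m\in U$ iff $4\mid m$, $e^m\in U$ for all $m$, $d^m\in V$ for all $m$, and $e^m\in V$ iff $4\mid m$. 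This is seen from normal forms of a power of a single letter.
\end{enumerate}

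First I handle pinches inside a single factor. By property (b), each $F_i$ (and likewise $E_i$) is a nonempty positive word containing stable letters. Hence $\varphi_A(A_{2+i})^{\pm1}=(d^iF_ie^i)^{\pm1}$ has all its stable letters of the same sign, and no pinch occurs inside one factor. The same holds trivially for $\varphi_A(A_\beta)^{\pm1}=s_\beta^{\pm1}$.

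So a pinch can only straddle a junction between consecutive letters $xy$ of the reduced word, where $y\neq x^{-1}$. Only the last stable letter of $\varphi(x)$, the first stable letter of $\varphi(y)$, and the $d,e$-letters between them matter. I enumerate the cases by type and sign.
\begin{itemize}
\item \textbf{Same sign at the junction.} This happens for $A_{2+i}A_{2+j}$, $A_\beta A_{2+j}$, $A_{2+i}A_\beta$, and their inverted versions. There is no pinch by observation 1.
\item \textbf{$A_{2+i}A_{2+j}^{-1}$.} The junction reads $s_a e^{i}e^{-j}s_b^{-1}$. A pinch needs $e^{i-j}\in V$, i.e.\ $4\mid i-j$, which forces $i=j$ since $i,j\in\{1,2,3\}$. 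Then $y=x^{-1}$, which is excluded.
\item \textbf{$A_{2+i}^{-1}A_{2+j}$.} The junction reads $s_a^{-1}d^{j-i}s_b$. A pinch needs $d^{j-i}\in U$, again forcing $i=j$.
\item \textbf{Mixed junctions with a single stable letter.}
\begin{itemize}
\item $A_\beta^{-1}A_{2+j}$ gives $s_\beta^{-1}d^js_b$, which needs $4\mid j$: impossible.
\item $A_{2+i}A_\beta^{-1}$ gives $s_ae^is_\beta^{-1}$, which needs $e^i\in V$: impossible.
\item $A_{2+i}^{-1}A_\beta$ gives $s_a^{-1}d^{-i}s_\beta$: impossible.
\item $A_\beta A_{2+j}^{-1}$ gives $s_\beta e^{-j}s_b^{-1}$: impossible.
\end{itemize}
\item \textbf{$A_\beta A_\gamma^{\pm1}$.} The only possible pinch is $s_\beta s_\beta^{-1}$ (or its mirror). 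This is exactly a free cancellation, excluded by reducedness.
\end{itemize}

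The $\varphi_B$ case is identical with $E_i$ in place of $F_i$, using property (b) for the $E_i$.

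The main obstacle is bookkeeping rather than ideas. One must make sure that every factor $d^iF_ie^i$ really begins and ends with a stable letter shielded only by the pure powers $d^i$ and $e^i$; this is where property (b) and positivity are used. One must also check that the exponent range $1\le i\le 3$ is exactly what prevents $d^{\pm i}\in U$ and $e^{\pm i}\in V$, and prevents $d^{j-i}$, $e^{i-j}$ from lying in the associated subgroups unless $i=j$.
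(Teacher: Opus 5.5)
Your proof is correct. The paper gives no argument for this statement: it quotes Borisov's Assertion~V and states it without proof. Your junction-by-junction verification is therefore a self-contained substitute for the citation, not a variant of an argument in the text.

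What your version adds is an explicit account of which hypotheses are used. It needs only that each $F_i$, $E_i$ is a nonempty positive word in $s_1,s_2$. The stronger ``contains both generators'' part of property~(b) plays no role here; the paper only needs it later, in Lemma~\ref{lem:ABintS}. The other ingredient is $1\le i\le 3$. This gives $d^{\pm i}\notin\langle d^4,e\rangle$ and $e^{\pm i}\notin\langle d,e^4\rangle$. It also means $d^{j-i}$ and $e^{i-j}$ lie in these subgroups only when $i=j$, which is exactly the excluded free cancellation.

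Your ``normal forms'' justification of observation~2 is fine, but it can be made one line. The homomorphism $\langle d,e\rangle\to\mathbb{Z}/4$ with $d\mapsto 1$, $e\mapsto 0$ kills $\langle d^4,e\rangle$ and sends $d^m$ to $m \bmod 4$; the case of $\langle d,e^4\rangle$ is symmetric.

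Your argument also recovers Borisov's Assertion~IV at no extra cost. A nonempty reduced word expands to an HNN-reduced word containing at least one stable letter. By Lemma~\ref{lem:multi-Britton}, such a word cannot represent $1$ in $\Gamma_1$.
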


For $\beta = 1,2$, let $S_\beta$ denote the subgroup of $\Gamma_1$ generated by $\{s_\beta, d,e\}$. We now prove the following.

\begin{lemma}
\label{lem:ABintS}
    For $\beta =1,2$, 
    \begin{equation}
    \label{eq:ABintS}
        \varphi_A(\langle A_1,\dots,A_5\rangle)\cap S_\beta = \langle s_\beta\rangle,\quad  \varphi_B(\langle B_1,\dots,B_5\rangle)\cap S_\beta = \langle s_\beta\rangle,
    \end{equation}
    and
    \begin{equation}
    \label{eq:ABintde}
         \varphi_A(\langle A_1,\dots,A_5\rangle)\cap \langle d,e\rangle = 1,\quad \varphi_B(\langle B_1,\dots,B_5\rangle)\cap\langle d,e\rangle = 1. 
    \end{equation}
\end{lemma}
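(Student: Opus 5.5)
The plan is to view $\Gamma_1$ as a multi-HNN extension of the free group $\langle d,e\rangle$ and apply Britton's lemma together with Proposition~\ref{pr:Borisov-V}. The relations $d^4s_\beta=s_\beta d$ and $es_\beta=s_\beta e^4$ say that $s_\beta^{-1}d^4s_\beta=d$ and $s_\beta^{-1}es_\beta=e^4$. So $\Gamma_1$ is the HNN extension of $\langle d,e\rangle$ with stable letters $s_1,s_2$, each conjugating $\langle d^4,e\rangle$ onto $\langle d,e^4\rangle$.

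The key tool is the standard consequence of Britton's lemma, as in \cite[Theorem~32]{Cohen1989} or the lemmas of Appendix~\ref{appen:HNN}. If two HNN-reduced words represent the same element, then they contain the same sequence of stable letters (with exponents). I will also use that any word in $s_\beta,d,e$ can be brought to HNN-reduced form by pinches. A pinch only involves the stable letter $s_\beta$ and replaces subwords by words in $d,e$. Hence every element of $S_\beta$ has an HNN-reduced representative whose only stable letters are $s_\beta^{\pm1}$. Likewise every element of $\langle d,e\rangle$ has a reduced representative with no stable letters at all.

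For \eqref{eq:ABintS}, I argue as follows for $\varphi_A$; the case of $\varphi_B$ is identical. Let $g=\varphi_A(u)\in S_\beta$ with $u$ a reduced word in $A_1,\dots,A_5$. By Proposition~\ref{pr:Borisov-V}, the direct expansion of $u$ is HNN-reduced. Each occurrence of a letter $A_j^{\pm1}$ in $u$ contributes its stable letters to this expansion, and none of them are cancelled. Comparing with a reduced representative of $g$ in the letters $s_\beta,d,e$, the expansion can contain no $s_{3-\beta}$. Now $A_{3-\beta}$ maps to $s_{3-\beta}$. For $i=1,2,3$, $A_{2+i}$ maps to $d^iF_ie^i$, which contains $s_{3-\beta}$ by property~(b) of $\Pi$. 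Hence $u$ involves only $A_\beta$, so $u=A_\beta^n$ and $g=s_\beta^n$. The reverse inclusion $\langle s_\beta\rangle\subseteq$ both sides is trivial.

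For \eqref{eq:ABintde}, suppose $\varphi_A(u)\in\langle d,e\rangle$. Then the reduced expansion of $u$ must contain no stable letters. But every generator $A_j$ contributes at least one stable letter, using property~(b) again for $j\ge3$. So $u$ is the empty word and $\varphi_A(u)=1$; the same argument works for $\varphi_B$ with $E_i$ in place of $F_i$.

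The main obstacle is stating the invariance of the stable-letter sequence of HNN-reduced words precisely for this multi-HNN setting. One also has to check carefully that Proposition~\ref{pr:Borisov-V} guarantees every letter of $u$ survives as its full block of stable letters in the expansion. I would first try to derive the invariance from Britton's lemma by considering $w_1w_2^{-1}=1$ and inducting on length, which is the standard argument.
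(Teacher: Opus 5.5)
Your proof is correct. For \eqref{eq:ABintS} it is the paper's argument: Proposition~\ref{pr:Borisov-V} makes the expansion of a reduced word HNN-reduced, and an element of $S_\beta$ has an HNN-reduced representative using only $s_\beta^{\pm1}$. Lemma~\ref{lem:multi-Britton} together with property~(b) then forces $u=A_\beta^n$.

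For \eqref{eq:ABintde} you take a different, equally short route. The paper does not return to Britton's lemma. It uses \eqref{eq:ABintS} to reduce to $\langle s_\beta\rangle\cap\langle d,e\rangle=1$. It then abelianizes: $\Gamma_1^{\mathrm{ab}}\cong\mathbb{Z}^2\oplus(\mathbb{Z}/3)^2$, where $s_\beta$ spans a free summand and $d,e$ are torsion.

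Your argument instead compares two representatives. The expansion of a nonempty reduced $u$ literally contains a stable letter, while a representative in $d,e$ alone has empty stable-letter sequence. This is self-contained and independent of the first part; it only needs each $F_i,E_i$ to be nonempty. The paper's version recycles \eqref{eq:ABintS} and replaces the second Britton comparison with a one-line homology computation.

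The two points you flag as the main obstacle are not real obstacles here:
\begin{enumerate}
\item Uniqueness of the stable-letter sequence of HNN-reduced words in the multi-HNN setting is exactly what Lemma~\ref{lem:multi-Britton} states, so no separate induction is needed.
\item The expansion is literal substitution, so every stable letter of every $\varphi_A(A_j^{\pm1})$ appears in it verbatim. Proposition~\ref{pr:Borisov-V} is therefore all you need for the second check.
\end{enumerate}
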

\begin{proof}
    It suffices to prove the statements for $\varphi_A$ and $\beta = 1$; the proofs for the other cases are identical. 
    
    For \eqref{eq:ABintS}, the inclusion $\langle s_1\rangle \leq \varphi_A(\langle A_1,\dots,A_5\rangle)\cap S_1$ is obvious. 
    Conversely, let $g\in  \varphi_A(\langle A_1,\dots,A_5\rangle)\cap S_1$. There exists a reduced word in $\langle A_1,\dots,A_5\rangle$ whose expansion via $\varphi_A$ is a word $W$ in $s_1,s_2,d,e$ representing $g$. By Proposition~\ref{pr:Borisov-V}, $W$ is HNN-reduced with respect to $\langle d,e\rangle \leq \Gamma_1$. 

    On the other hand, since $g\in S_1$, there exists a word $W'$ in $s_1, d,e$ representing $g$ that is HNN-reduced with respect to $\langle d,e\rangle \leq S_1$. By definition, $W'$ is also HNN-reduced with respect to $\langle d,e\rangle \leq \Gamma_1$ since it does not contain the letter $s_2$. By Lemma~\ref{lem:multi-Britton}, $W$ likewise contains no occurrence of $s_2$. By Property~(b) of $\Pi$, each of $\varphi_A(A_2),\ldots,\varphi_A(A_5)$ contains the letter $s_2$, whereas $\varphi_A(A_1)=s_1$. Hence the reduced word whose expansion is $W$ can involve only $A_1$, and therefore $W$ is a power of $s_1$.

    For \eqref{eq:ABintde} it suffices to show that $\langle d,e\rangle \cap \langle s_1\rangle = 1$. This follows from the observation that in the abelianization of $\Gamma_1$, $s_1$ contributes to a free $\mathbb{Z}$ summand while both $d$ and $e$ have finite order. 
\end{proof}

Since $\Gamma_1\leq \Gamma_2$ is the HNN extension relative to the group isomorphism from $\varphi_A(\langle A_1,\dots, A_5\rangle)$ to $\varphi_B(\langle B_1,\dots, B_5\rangle)$ induced by $A_j\mapsto B_j$, taking $A = \varphi_A(\langle A_1,\dots, A_5\rangle)$ and $B=\langle d, e\rangle$ in Lemma~\ref{lem:restrictHNN}, we obtain the following corollary from \eqref{eq:ABintde}.

\begin{corollary}
\label{cor:cde}
    The subgroup of $\Gamma_2$ generated by $\{c,d,e\}$ is free on $\{c,d,e\}$ and its intersection with $\Gamma_1$ is exactly $\langle d,e\rangle$.\qed
\end{corollary}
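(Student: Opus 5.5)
The corollary follows from a restriction lemma for HNN extensions; here is how I would apply it.

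\textbf{Setup.} View $\Gamma_2$ as the HNN extension of $\Gamma_1$ with stable letter $c$ and associated subgroups
\[
A=\varphi_A(\langle A_1,\dots,A_5\rangle)\quad\text{and}\quad \varphi_B(\langle B_1,\dots,B_5\rangle).
\]
This is legitimate by Borisov's Assertion IV: both maps are monomorphisms, so $A_j\mapsto B_j$ induces an isomorphism between the two subgroups. The relators of $\Gamma_2$ are exactly
\[
c^{-1}\varphi_A(A_j)c=\varphi_B(B_j).
\]

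\textbf{Intersection hypotheses.} The restriction lemma (Lemma~\ref{lem:restrictHNN} in the appendix) should say the following. If $B\le\Gamma_1$ meets both associated subgroups trivially, then the subgroup $\langle B,c\rangle$ of the HNN extension is $B*\langle c\rangle$, and its intersection with the base $\Gamma_1$ is $B$. I take $B=\langle d,e\rangle$. The two trivial intersections required are exactly the two halves of \eqref{eq:ABintde} in Lemma~\ref{lem:ABintS}.

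If the appendix lemma is phrased only for $A$, I would instead verify the conclusion directly with Britton's lemma:
\begin{itemize}
\item Take an alternating word $b_0c^{\epsilon_1}b_1\cdots c^{\epsilon_n}b_n$ with $b_i\in B$ and each interior $b_i\neq1$ wherever $\epsilon_i=-\epsilon_{i+1}$.
\item Any pinch would require $b_i$ to lie in $A$ or in $\varphi_B(\ldots)$. Since $b_i\neq 1$, \eqref{eq:ABintde} rules this out.
\item Hence such a word is HNN-reduced. For $n\ge1$ it is therefore nontrivial, and it does not lie in $\Gamma_1$.
\end{itemize}
This gives $\langle B,c\rangle=B*\langle c\rangle$ and $\langle B,c\rangle\cap\Gamma_1=B$.

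\textbf{Freeness of $\langle d,e\rangle$.} It remains to know that $\langle d,e\rangle$ is free on $\{d,e\}$. This is Borisov's statement that $\langle d,e\rangle\le\Gamma_1$ embeds, with $\langle d,e\rangle$ the free group, since $\Gamma_1$ is a multi-HNN extension of $F(d,e)$. It follows that $\langle c,d,e\rangle=F(d,e)*\langle c\rangle$ is free on $\{c,d,e\}$.

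\textbf{Main obstacle.} The only delicate point is matching the exact hypotheses of Lemma~\ref{lem:restrictHNN}, in particular whether it needs trivial intersection with both associated subgroups. Lemma~\ref{lem:ABintS} supplies both, so I expect no real difficulty.
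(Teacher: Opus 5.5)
Your proposal is correct and is essentially the paper's argument. The paper applies Lemma~\ref{lem:restrictHNN} to $\Gamma_1\le\Gamma_2$ (stable letter $c$) with $A=\varphi_A(\langle A_1,\dots,A_5\rangle)$ and $B=\langle d,e\rangle$; there \eqref{eq:ABintde} makes $\varphi(B\cap A)=B\cap\varphi(A)$ hold trivially (both sides are $1$), so $\langle B,c\rangle=B*\langle c\rangle$ meets $\Gamma_1$ exactly in $B$, and your explicit appeal to Borisov's embedding of the free group $F(d,e)$ in $\Gamma_1$ supplies the freeness of $\langle d,e\rangle$ that the paper leaves implicit.
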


Similarly, taking $A = \varphi_A(\langle A_1,\dots, A_5\rangle)$ and $B = S_\beta$ in Lemma~\ref{lem:restrictHNN}, we deduce the following corollary from \eqref{eq:ABintS}.

\begin{corollary}
\label{cor:scde}
    For $\beta = 1,2$, the subgroup of $\Gamma_2$ generated by $\{s_\beta, c,d,e\}$ has presentation
    \begin{equation}
    \label{eq:scde}
        \langle s_\beta, c,d,e\mid d^4s_\beta = s_\beta d,\  es_\beta = s_\beta e^4,\ s_\beta c = cs_\beta\rangle, 
    \end{equation}
    and its intersection with $\Gamma_1$ is exactly $S_\beta$. \qed
\end{corollary}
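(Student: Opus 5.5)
The plan is to apply Lemma~\ref{lem:restrictHNN} to the HNN extension $\Gamma_1\le\Gamma_2$, exactly as for Corollary~\ref{cor:cde}, but now with $B=S_\beta$. I expect Lemma~\ref{lem:restrictHNN} to say the following. Let $G^*$ be the HNN extension of $G$ with stable letter $c$ and associated isomorphism $\psi\colon A\to A'$. Let $B\le G$ be a subgroup with $A\cap B$ and $A'\cap B$ corresponding under $\psi$, i.e.\ $\psi(A\cap B)=A'\cap B$. Then $\langle B,c\rangle\le G^*$ is the HNN extension of $B$ relative to $\psi|_{A\cap B}\colon A\cap B\to A'\cap B$, and $\langle B,c\rangle\cap G=B$.

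First I would pin down the presentation of $S_\beta$ itself. $\Gamma_1$ is a multiple HNN extension of the free group $\langle d,e\rangle$, with stable letters $s_1,s_2$ and associated subgroups $\langle d^4\rangle\to\langle d\rangle$ and $\langle e\rangle\to\langle e^4\rangle$. Consider the one-letter HNN extension $H_\beta=\langle s_\beta,d,e\mid d^4s_\beta=s_\beta d,\ es_\beta=s_\beta e^4\rangle$ of $\langle d,e\rangle$. It maps onto $S_\beta$. A word in $s_\beta,d,e$ that is HNN-reduced in $H_\beta$ is also HNN-reduced in $\Gamma_1$, since it contains no $s_{3-\beta}$. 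By Britton's lemma (Lemma~\ref{lem:multi-Britton}), such a word is nontrivial in $\Gamma_1$ whenever it is nontrivial in $H_\beta$. Hence $H_\beta\to S_\beta$ is injective, and $S_\beta$ has the presentation $\langle s_\beta,d,e\mid d^4s_\beta=s_\beta d,\ es_\beta=s_\beta e^4\rangle$.

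Next I would check the hypotheses of Lemma~\ref{lem:restrictHNN}. Take $A=\varphi_A(\langle A_1,\dots,A_5\rangle)$, $A'=\varphi_B(\langle B_1,\dots,B_5\rangle)$, and let $\psi$ be induced by $A_j\mapsto B_j$. By \eqref{eq:ABintS}, $A\cap S_\beta=\langle s_\beta\rangle=A'\cap S_\beta$. Moreover $\psi(s_\beta)=\psi(\varphi_A(A_\beta))=\varphi_B(B_\beta)=s_\beta$, so $\psi$ restricts to the identity map from $A\cap S_\beta$ onto $A'\cap S_\beta$. This compatibility is the point I expect to need the most care. If Lemma~\ref{lem:restrictHNN} is phrased with the intersection on only one side, one also has to check that $\psi(A\cap S_\beta)$ equals the full intersection $A'\cap S_\beta$ rather than merely being contained in it. Here this holds because both intersections are exactly $\langle s_\beta\rangle$, and both parts of \eqref{eq:ABintS} are available.

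Lemma~\ref{lem:restrictHNN} then gives two conclusions. First, $\langle s_\beta,c,d,e\rangle=\langle S_\beta,c\rangle$ is the HNN extension of $S_\beta$ with stable letter $c$ along the identity of $\langle s_\beta\rangle$; this adds exactly the relation $s_\beta c=cs_\beta$. Second, its intersection with $\Gamma_1$ is $S_\beta$. Combining the first conclusion with the presentation of $S_\beta$ from the first step yields \eqref{eq:scde}.
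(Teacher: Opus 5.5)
Your proposal is correct and follows the paper's route: it applies Lemma~\ref{lem:restrictHNN} to $\Gamma_1\le\Gamma_2$ with $A=\varphi_A(\langle A_1,\dots,A_5\rangle)$ and $B=S_\beta$, using both halves of \eqref{eq:ABintS} together with $\psi(s_\beta)=s_\beta$ to get $\psi(S_\beta\cap A)=S_\beta\cap\psi(A)$. You also spell out, via Britton's lemma, the presentation $\langle s_\beta,d,e\mid d^4s_\beta=s_\beta d,\ es_\beta=s_\beta e^4\rangle$ of $S_\beta$, which the paper uses implicitly (the associated subgroups for $s_\beta$ are more precisely the single rank-two pair $\langle d^4,e\rangle\to\langle d,e^4\rangle$ rather than two cyclic pairs, but your argument is unaffected).
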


Write $p\coloneqq P^{-1}tP$ and observe that
\begin{equation}
\label{eq:G2-3}
    \Gamma_3 = \langle \Gamma_2, p\mid ap=pa,\ a\in P^{-1}\langle c,d\rangle P\rangle.
\end{equation}
Let $\Delta$ be the subgroup of $\Gamma_3$ generated by $\{c, e, p\}$. Taking $B$ in Corollary~\ref{cor:centralHNN} as the subgroup $\langle c, e \rangle$ of $\Gamma_2$ (note that $\langle c,e\rangle$ is free due to Corollary~\ref{cor:cde}), we see that
\begin{equation*}
\Delta\cap \Gamma_2 =\langle c,e\rangle.
\end{equation*} 
Furthermore, we observe that $\Gamma_3\leq \Gamma_4$ is the HNN extension relative to $1_\Delta$. 
We are now ready to prove Lemma~\ref{lem:G4-5}.

\begin{proof}[Proof of Lemma~\ref{lem:G4-5}]
    The desired presentation of $\mathcal{A}$ is obtained immediately by applying Corollary~\ref{cor:centralHNN} with $B$ being the group in \eqref{eq:scde} for $\beta =1$.

    For $\mathcal{B}$, let $B$ be the group in \eqref{eq:scde} for $\beta =2$. Then $\Delta\cap B = \langle c,e\rangle$ since
    \begin{equation*}
        \langle c,e\rangle \leq \Delta\cap B \leq \Delta\cap \Gamma_2 = \langle c,e\rangle.
    \end{equation*}
    Applying Corollary~\ref{cor:centralHNN} with this $B$ (and $G=\Gamma_3\leq\Gamma_4=H$) therefore gives the desired presentation of $\mathcal{B}$. 
\end{proof}

\subsection{Proof of Lemma~\ref{lem:G5-6}}

Let $D$ and $D_t$ be the subgroups of $\Gamma_3$ generated by $\{c,d,e\}$ and $\{c,d,e,t\}$ respectively; we have seen from Corollary~\ref{cor:cde} that $D$ is freely generated by $\{c,d,e\}$. Since $\Gamma_2\leq \Gamma_3$ is the HNN extension relative to the identity on the subgroup generated by $\{c,d\}$, applying Corollary~\ref{cor:centralHNN} with $B = D$ gives that
\begin{equation*}
    D_t\cong \langle c,d,e,t\mid ct=tc,\ dt=td\rangle.
\end{equation*}
Restricting to $\mathcal{C}$, we obtain the desired presentation
\begin{equation*}
    \mathcal{C} \cong \langle c,d,t\mid ct=tc,\ dt=td\rangle.
\end{equation*}
For the presentation of $\mathcal{D}$, recall that $D\cap \Gamma_1 = \langle d,e\rangle$ from Corollary~\ref{cor:cde}. Let $S$ be the subgroup of $\Gamma_1$ generated by $\{s_1,s_2\}$. Then $P\in S$, and consequently $P\not\in D$ since $D$ intersects with only the torsion summands of $\Gamma_1$ in the abelianization. Hence we may apply Lemma~\ref{lem:conjHNN} with this $P$ and $D$ along with $K=\langle c,e\rangle$ and $G=\Gamma_2$, obtaining
\begin{equation*}
    \Delta\cap D_t=\langle c,e\rangle. 
\end{equation*}
Now, since $\Gamma_3\leq \Gamma_4$ is the HNN extension relative to $1_\Delta$, applying Corollary~\ref{cor:centralHNN} with $B = D_t$ shows that the subgroup of $\Gamma_4$ generated by $\{c,d,e,t,k\}$ admits the presentation
\begin{equation*}
 \langle c,d,e,k,t\mid ck=kc,\ ek=ke,\ ct=tc,\ dt=td \rangle.
\end{equation*}
Since trivializing $d$ and $e$ gives a left inverse of the natural map
\begin{equation*}
    \langle c,k,t \mid ck=kc,\  ct=tc\rangle \to \langle c,d,e,k, t\mid ck=kc,\ ek=ke,\ ct=tc,\ dt=td  \rangle,
\end{equation*}
it follows that the subgroup $B_k$ of $\Gamma_4$ generated by $\{c,k,t\}$ is presented by
\begin{equation*}
    B_k\cong \langle c, k, t \mid ck=kc, \ ct=tc\rangle.
\end{equation*}
We are now only one proposition away from obtaining the desired presentation of $\mathcal{D}$.

\begin{proposition}
\label{pr:kintAB}
    $B_k\cap \mathcal{A}$ and $B_k\cap \mathcal{B}$ are the subgroups of $\Gamma_4$ generated by $\{c,t\}$ and $\{c,k\}$ respectively. 
\end{proposition}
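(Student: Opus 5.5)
The plan is to exploit that $\Gamma_3\le\Gamma_4$ is the HNN extension with stable letter $k$ relative to $1_\Delta$, where $\Delta=\langle c,e,p\rangle$ and $p=P^{-1}tP$. The whole argument runs through Britton's lemma (Lemma~\ref{lem:multi-Britton}) and the restriction lemmas of Appendix~\ref{appen:HNN}. I would first record two intersections inside $\Gamma_3$:
\begin{equation*}
\langle c,t\rangle\cap\Delta=\langle c\rangle,\qquad \mathcal{B}_0\cap\Delta=\langle c,e\rangle,\quad\text{where } \mathcal{B}_0:=\langle s_2,c,d,e\rangle .
\end{equation*}
The second intersection was already obtained in the proof of Lemma~\ref{lem:G4-5}. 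For the first, use $\Delta\cap D_t=\langle c,e\rangle$ together with $\langle c,t\rangle\le D_t$. This gives $\langle c,t\rangle\cap\Delta=\langle c,t\rangle\cap\langle c,e\rangle$. Inside $D_t\cong\langle c,d,e,t\mid ct=tc,\ dt=td\rangle$ this intersection is $\langle c\rangle$. To see this, note that $D_t$ is an HNN extension of the free group $\langle c,d,e\rangle$ with stable letter $t$ centralizing $\langle c,d\rangle$. An element of $\langle c,e\rangle$ has $t$-length $0$, so an element of $\langle c,t\rangle=\langle c\rangle\times\langle t\rangle$ lying in it must be a power of $c$.

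For $B_k\cap\mathcal{A}$, I would apply Lemma~\ref{lem:restrictHNN} (or Corollary~\ref{cor:centralHNN}) to the subgroup $\langle c,t\rangle\le\Gamma_3$. Since $\langle c,t\rangle\cap\Delta=\langle c\rangle$, the subgroup $B_k$ is the HNN extension of $\langle c,t\rangle$ by $k$ centralizing $\langle c\rangle$. Moreover $B_k\cap\Gamma_3=\langle c,t\rangle$. As $\mathcal{A}\le\Gamma_3$ and $\langle c,t\rangle\le\mathcal{A}$, this gives $B_k\cap\mathcal{A}=\langle c,t\rangle$ directly.

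For $B_k\cap\mathcal{B}$, the element $k$ lies in both subgroups, so a finer argument is needed. Let $g\in B_k\cap\mathcal{B}$. Write $g$ as a reduced form $u_0k^{n_1}u_1\cdots k^{n_r}u_r$ with $u_j\in\langle c,t\rangle$, and as $v_0k^{m_1}v_1\cdots k^{m_s}v_s$ with $v_j\in\mathcal{B}_0$. By the intersections above, interior $u_j$ can be taken outside $\langle c\rangle$ and interior $v_j$ outside $\langle c,e\rangle$, hence all outside $\Delta$. So both expressions are Britton-reduced in $\Gamma_4$. By Britton's lemma, $r=s$, $n_j=m_j$, and the syllables match up to $\Delta$-double-coset adjustments. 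Inductively, I want to show that each $u_j$ may be replaced by a power of $c$, which would force $g\in\langle c,k\rangle$. Peeling off the last syllable gives $u_r v_r^{-1}\in\Delta$ modulo earlier adjustments, so $u_r\in\Delta\,\mathcal{B}_0$. The key claim is therefore
\begin{equation*}
\langle c,t\rangle\cap \Delta\,\mathcal{B}_0\,\Delta\subseteq \langle c\rangle\cdot(\text{something absorbed into }\Delta)\quad\text{i.e.}\quad \langle c,t\rangle\cap\Delta\mathcal{B}_0\Delta=\langle c\rangle .
\end{equation*}
This is what makes the induction (and the base case $r=0$, where $g\in\langle c,t\rangle\cap\mathcal{B}_0\le\langle c\rangle$) go through.

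This double-coset claim is the main obstacle. I would attack it in $\Gamma_3$, viewed as the HNN extension of $\Gamma_2$ with stable letter $p$ commuting with $P^{-1}\langle c,d\rangle P$, as in \eqref{eq:G2-3}. Since $t=PpP^{-1}$, an element $c^at^b$ with $b\neq0$ has the $p$-reduced form $c^aPp^bP^{-1}$. Reducedness holds because $P\notin P^{-1}\langle c,d\rangle P$-cosets appropriately, using $P\notin D$ as in the application of Lemma~\ref{lem:conjHNN}. On the other hand, elements of $\Delta\mathcal{B}_0\Delta$ have $p$-syllables separated by elements of $\langle c,e\rangle\mathcal{B}_0\langle c,e\rangle\subseteq\Gamma_2$. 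Comparing via Lemma~\ref{lem:multi-Britton}, a single $p^b$ flanked by $P$ and $P^{-1}$ would require $P$ to lie in $\langle c,e\rangle\,P^{-1}\langle c,d\rangle P$, or a similar coset. I expect to rule this out with the abelianization argument already used: $P$ has nonzero image in the free summand spanned by $s_1,s_2$, while $c,d,e$ do not. This would force $b=0$, and then $c^a\in\langle c,t\rangle\cap\Delta\mathcal{B}_0\Delta$ is fine. The argument for $\langle c,t\rangle\cap\mathcal{B}_0=\langle c\rangle$ is the special case with no $\Delta$ factors.
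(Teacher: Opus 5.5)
Your architecture matches the paper's. The $\mathcal{A}$ case, via Corollary~\ref{cor:centralHNN} applied to $\langle c,t\rangle$, is identical. For $\mathcal{B}$ you reprove Lemma~\ref{lem:intCentHNN} inline and reduce to the paper's own condition $\langle c,t\rangle\cap\Delta\mathcal{B}_0\Delta=\langle c\rangle$. Writing $c^at^b=c^aPp^bP^{-1}$ in the $p$-structure, instead of substituting $p=P^{-1}tP$ into the $\Delta\mathcal{B}_0\Delta$-element as the paper does, is a harmless mirror-image variant. That word is $p$-reduced simply because all its $p$-letters have the same sign, so no appeal to $P\notin D$ is needed there.

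The gap is in the decisive last step, which you rightly flagged as the main obstacle.

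\emph{What the comparison actually gives.} You note yourself that the coefficients of a $p$-reduced form of an element of $\Delta\mathcal{B}_0\Delta$ lie in $\langle c,e\rangle\mathcal{B}_0\langle c,e\rangle=\mathcal{B}_0=\langle s_2,c,d,e\rangle$. Comparing with $c^aPp^bP^{-1}$ at the last syllable then gives $m_{n+1}P\in P^{-1}\langle c,d\rangle P$ with $m_{n+1}\in\mathcal{B}_0$, i.e.\ only $P\in\mathcal{B}_0$. The coset $\langle c,e\rangle P^{-1}\langle c,d\rangle P$ you wrote (equivalent to $P\in\langle c,d,e\rangle$) does not follow from your setup.

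\emph{Why your contradiction fails.} Your argument is that $c,d,e$ have no image in the free part. This cannot exclude $P\in\langle s_2,c,d,e\rangle$, because $s_2$ does have such an image.

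\emph{First fix (the paper's).} Corollary~\ref{cor:scde} gives $\mathcal{B}_0\cap\Gamma_1=S_2$, so $P\in\mathcal{B}_0$ forces $P\in S_2$. The image of $S_2$ in $(\Gamma_1)_{\mathrm{ab}}\cong\mathbb{Z}s_1\oplus\mathbb{Z}s_2\oplus(\mathbb{Z}/3)^2$ has zero $s_1$-coordinate, whereas $P$ contains $s_1$ by property (b). The abelianization must be taken in $\Gamma_1$ after intersecting. In $(\Gamma_2)_{\mathrm{ab}}$ the relators $d^iF_ie^ic=cd^iE_ie^i$ impose $[F_i]=[E_i]$, and for the $\Pi$ of Appendix~\ref{appen:encoding} this already forces $s_2=0$ and $2s_1=0$.

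\emph{Second fix (justifying your coset).} Write $g=\delta_1m\delta_2$ with $\delta_1,\delta_2$ in $p$-reduced form with coefficients in $\langle c,e\rangle$. Pinches can then only occur at the middle. So if $g$ has $p$-length $n=|b|\ge1$, at least one end coefficient, $m_1$ or $m_{n+1}$, survives unchanged in $\langle c,e\rangle$. Peeling from that side gives $P\in\langle c,d,e\rangle=D$, and then $P\notin D$ suffices.

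One of these two arguments has to be supplied for your proof to close.
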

With Proposition~\ref{pr:kintAB}, applying Lemma~\ref{lem:restrictHNN} with $B = B_k$ and $A = \mathcal{A}$, we obtain that the subgroup of $\Gamma_5$ generated by $\{c, k, t, y\}$ has presentation
\begin{equation*}
    \langle c,k,t,y\mid ck=kc, \ ct=tc, \ ty=yk, \ cy=yc\rangle,
\end{equation*}
where $k$ is redundant because $k=y^{-1}ty$; eliminating it gives
\begin{equation*}
    \mathcal{D} \cong \langle c,t,y\mid ct=tc,\ cy=yc\rangle
\end{equation*}
as desired. 

\begin{proof}[Proof of Proposition~\ref{pr:kintAB}]
    Let $C_t$ and $C_k$ be the subgroups of $\Gamma_4$ generated by $\{c,t\}$ and $\{c,k\}$ respectively. 
    For $B_k\cap \mathcal{A}$, it suffices to show that $B_k\cap \Gamma_3 = C_t$ since $C_t\leq \mathcal{A}\leq \Gamma_3$. Since $\Gamma_3\leq \Gamma_4$ is the HNN extension relative to $1_\Delta$, this follows immediately from Corollary~\ref{cor:centralHNN} with $B=C_t$.

    For $B_k\cap \mathcal{B}$, using the notation of Lemma~\ref{lem:intCentHNN} and set $G= \Gamma_3$, $A=\Delta$, $L= C_t$, $C=\langle c\rangle$. Let $M$ be the subgroup of $\Gamma_3$ generated by $\{s_2, c,d,e\}$. It suffices to show that 
    \begin{equation*}
        C_t\cap \Delta M\Delta =\langle c\rangle.
    \end{equation*}
    It is clear that $\langle c\rangle \subset C_t\cap \Delta M \Delta$. For the converse, note that an element $g\in C_t\cap\Delta M \Delta$ is represented by a word of the form
    \begin{equation}
    \label{eq:DMD-form}
        g= h_1p^{\epsilon_1}\cdots p^{\epsilon_n}h_{n+1} \cdot m \cdot g_1p^{\mu_1}\cdots p^{\mu_{n'}}g_{n'+1},
    \end{equation}
    where $m \in M$, $h_j, g_j \in \langle c,e\rangle$, and $\epsilon_j, \mu_j\in \{\pm 1\}$. Recall from \eqref{eq:G2-3} that $\Gamma_2\leq \Gamma_3$ is the HNN extension relative to $1_{P^{-1}\langle c,d\rangle P}$, hence reducing $p$-pinches preserves the form in \eqref{eq:DMD-form}. In particular, $g$ admits an HNN-reduced expression of the form
    \begin{equation}
    \label{eq:p-reduced}
        g= m_1p^{\varepsilon_1}\cdots p^{\varepsilon_{n''}}m_{{n''}+1},
    \end{equation}
    where $\varepsilon_j\in \{\pm1\}$ and $m_j\in M$ since $\langle c,e\rangle \subset M$. Substituting $p= P^{-1}tP$ gives
    \begin{equation*}
        g= (m_1P^{-1})t^{\varepsilon_1} (Pm_2P^{-1})t^{\varepsilon_2}\cdots t^{\varepsilon_{n''}} (Pm_{n''+1}),
    \end{equation*}
    which is HNN-reduced with respect to the extension $\Gamma_2\leq \Gamma_3$ obtained by adjoining $t$, since a $t$-pinch would imply that $Pm_jP^{-1} \in \langle c,d\rangle $ for some $j$ with $\varepsilon_{j-1}=-\varepsilon_j$, which implies that $m_j\in P^{-1}\langle c,d\rangle P$, giving a $p$-pinch in \eqref{eq:p-reduced}. 

    On the other hand, since $g\in C_t$, it admits the following reduced form in terms of the $t$-extension:
    \begin{equation*}
        g = \gamma_1 t^{\varepsilon_1}\cdots t^{\varepsilon_{n''}}\gamma_{{n''}+1},
    \end{equation*}
    where $\gamma_j\in \langle c\rangle$. Hence
    \begin{equation*}
        (m_1P^{-1})t^{\varepsilon_1} (Pm_2P^{-1})t^{\varepsilon_2}\cdots t^{\varepsilon_{n''}} (Pm_{n''+1})\gamma_{n''+1}^{-1} t^{-\varepsilon_{n''}}\cdots t^{-\varepsilon_1}\gamma_1^{-1} = 1. 
    \end{equation*}
    If $n''\neq 0$, Lemma~\ref{lem:multi-Britton} implies that $t^{\varepsilon_{n''}} (Pm_{n''+1})\gamma_{{n''}+1}^{-1} t^{-\varepsilon_{n''}}$ is a $t$-pinch, hence 
    \begin{equation*}
        (Pm_{n''+1})\gamma_{{n''}+1}^{-1}\in \langle c,d\rangle,
    \end{equation*}
    which implies that $P\in M$. However, by the property (b) of $\Pi$, this is not possible since, recalling that $M\cap \Gamma_1 = S_2$ from Corollary~\ref{cor:scde}, $M$ intersects with only the free summand given by $s_2$ in the abelianization of $\Gamma_1$, while $P$ projects nontrivially to both the free summands given by $s_1$ and $s_2$. Therefore $n''=0$, which gives $g\in \langle c\rangle$ as desired. 
\end{proof}

\appendix
\section{A faithful encoding of Matiyasevich's construction}
\label{appen:encoding}

In \cite[eq.~(6)]{Matiyasevich1995}, a semigroup $\Sigma$ of the form
\begin{equation*}
    \Sigma = \langle x, y\mid \mathcal{F}_i\sim \mathcal{E}_i,\ i=1,2,3 \rangle
\end{equation*}
was constructed such that
\begin{enumerate}
    \item[(i)] There exists a positive word $\mathcal{P}$ in $x,y$ such that there is no algorithm to decide, given a word $\mathcal{G}$, whether or not $\mathcal{G}\sim \mathcal{P}$ in $\Sigma$;
    \item[(ii)] $\mathcal{F}_1=xxyxy$, $\mathcal{E}_1=yxx$,  $\mathcal{F}_2=xxyy$ and $\mathcal{E}_2 = yxx$.
\end{enumerate}
It is clear that $\Sigma$ satisfies the properties (a) and (c) listed at the beginning of Section~\ref{sec:construction}. The property (b), however, is not immediately clear from the literature, as $\mathcal{F}_3$, $\mathcal{E}_3$ and $\mathcal{P}$ were not explicitly stated. Therefore we apply a faithful encoding to $\Sigma$ to avoid having to determine these words explicitly. 

Consider the homomorphism $\eta$ from the free semigroup $\langle x,y\rangle$ to the free semigroup $\langle s_1,s_2\rangle$ defined by
\begin{equation*}
    \eta(x) = s_1s_2^2s_1, \ \eta(y) = s_1s_2^3s_1.
\end{equation*}
Put $F_i\coloneqq \eta(\mathcal{F}_i)$, $E_i \coloneqq  \eta(\mathcal{E}_i)$ and $P\coloneqq \eta(\mathcal{P})$, and define
\begin{equation*}
    \Pi \coloneqq   \langle s_1, s_2\mid F_i\sim E_i,\ i=1,2,3\rangle.
\end{equation*}
It is straightforward to see that $\Pi$ satisfies both properties (b) and (c). For property (a), it suffices to show the following:

\begin{proposition}
    For any two positive words $U$ and $V$ in $x$ and $y$, $U\sim V$ in $\Sigma$ if and only if $\eta(U)\sim \eta(V)$ in $\Pi$. 
\end{proposition}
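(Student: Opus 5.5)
The forward direction is immediate, because $\eta$ is a semigroup homomorphism. If $U\sim V$ in $\Sigma$, there is a finite chain of elementary moves $W_1\mathcal{F}_iW_2 \leftrightarrow W_1\mathcal{E}_iW_2$. Applying $\eta$ to each move gives $\eta(W_1)F_i\eta(W_2)\leftrightarrow \eta(W_1)E_i\eta(W_2)$, which is an elementary move in $\Pi$. Hence $\eta(U)\sim\eta(V)$.

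For the converse I plan to show that the image $\eta(\langle x,y\rangle)$ is closed under elementary moves of $\Pi$, and that every such move comes from a move of $\Sigma$. The key tool is the block structure of $\eta$-images. Every word in the image is a concatenation of blocks $s_1s_2^{2}s_1$ and $s_1s_2^{3}s_1$. Reading left to right, such a word has the form $s_1s_2^{a_1}s_1\,s_1s_2^{a_2}s_1\cdots$ with each $a_j\in\{2,3\}$. The maximal $s_2$-runs are exactly the $s_2^{a_j}$. Each is flanked by single $s_1$'s, and consecutive runs are separated by exactly $s_1^2$, apart from a single $s_1$ at each end. This makes $\eta$ injective and gives unique decodability: a word is in the image iff it has this shape, and the decoding is recovered from the sequence of run lengths.

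The main step is the following claim. Suppose $\eta(W)=X F_i Y$, where $F_i=\eta(\mathcal{F}_i)$ (or the same with $E_i$). Then $X=\eta(W_1)$ and $Y=\eta(W_2)$ with $W=W_1\mathcal{F}_iW_2$. To prove it, note that $F_i$ starts with $s_1s_2$ and ends with $s_2s_1$, since all words $\mathcal{F}_i,\mathcal{E}_i$ are nonempty. So the occurrence of $F_i$ must begin at an $s_1$ that is immediately followed by $s_2$ in $\eta(W)$. Every such $s_1$ is the first letter of a block. The only worry is an $s_1$ immediately preceding a run: by the block shape, that $s_1$ is always the opening letter of the block containing the run. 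So the occurrence starts at a block boundary. Symmetrically, it ends at a block boundary, because any $s_1$ immediately after an $s_2$ closes a block. Then $X$ and $Y$ are concatenations of whole blocks, hence $\eta$-images. Since the middle segment is $\eta(W')$ with $\eta(W')=\eta(\mathcal{F}_i)$, injectivity gives $W'=\mathcal{F}_i$.

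With this claim the converse follows by induction along a chain $\eta(U)=Z_0\leftrightarrow Z_1\leftrightarrow\cdots\leftrightarrow Z_n=\eta(V)$ in $\Pi$. By induction each $Z_j=\eta(W_j)$ with $W_{j}\sim W_{j-1}$ in $\Sigma$. Indeed, if $Z_{j-1}=\eta(W_{j-1})=XF_iY$, the claim gives $W_{j-1}=W'\mathcal{F}_iW''$, so $Z_j=XE_iY=\eta(W'\mathcal{E}_iW'')$, and $W'\mathcal{F}_iW''\sim W'\mathcal{E}_iW''$ in $\Sigma$. The same argument applies with the roles of $F_i$ and $E_i$ swapped. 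Finally, injectivity of $\eta$ gives $W_n=V$, so $U\sim V$.

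The main obstacle is the boundary-alignment argument: checking that an occurrence of $F_i$ or $E_i$ cannot straddle blocks in an offset way. This is where the choice of exponents and the $s_1$ padding matter. I would verify it by the run-length description: a run of $s_2$'s together with its neighbouring $s_1$'s pins down block boundaries exactly.
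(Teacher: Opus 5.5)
Your proof is correct and follows the paper's argument. Both rest on the observation that $s_1s_2$ occurs in an $\eta$-image only at the start of a block $\eta(x)$ or $\eta(y)$, so every occurrence of $F_i$ or $E_i$ is block-aligned and each elementary move in $\Pi$ pulls back to a move in $\Sigma$. Your separate check that occurrences also end at a block boundary is harmless but not needed: $\{\eta(x),\eta(y)\}$ is a prefix code and $F_i$, $E_i$ are themselves concatenations of blocks.
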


\begin{proof}
    The forward implication is immediate. For the converse, observe that $\eta$ is injective and that every word in its image has a unique factorization into copies of $\eta(x)$ and $\eta(y)$. It suffices to show that no occurrence of $F_i$ or $E_i$ can begin in the interior of a factor $\eta(x)$ or $\eta(y)$, so that we can translate replacements $F_i\leftrightarrow E_i$ in $\Pi$ back into replacements in $\Sigma$. Indeed, both $F_i$ and $E_i$ begin with $s_1s_2$, which occurs only at the very beginning of each $\eta(x)$ and $\eta(y)$ for an element in the image of $\eta$ since it must be of the form
    \begin{equation*}
        s_1s_2^{j_1}s_1^2s_2^{j_2}s_1^2\cdots s_2^{j_n}s_1,\quad j_k\in \{2,3\}.\qedhere
    \end{equation*}
\end{proof}

\section{Lemmas about HNN extensions}
\label{appen:HNN}

Let $G$ be a group, $A \leq G$ be a subgroup of $G$ and $\varphi\colon A \to G$ be a monomorphism. Let $\Lambda$ be a finite index set and $H = \langle G, u_\alpha\mid au_\alpha=u_\alpha\varphi(a),\forall a\in A \quad (\alpha\in \Lambda)\rangle$ be a (multi-)HNN extension of $G$ relative to $\varphi$. A word in $G, u_\alpha$ is \textit{HNN-reduced} with respect to $G\leq H$ if it contains no \textit{pinches}, i.e., subwords of the form $u_\alpha^{-1}au_\alpha$ with $a\in A$ or $u_\alpha bu_\alpha^{-1}$ with $b\in \varphi(A)$. By starting with an arbitrary word in $G, u_\alpha$ and replacing $u_\alpha^{-1}au_\alpha$ and $u_\alpha b u_\alpha^{-1}$ with $\varphi(a)$ and $\varphi^{-1}(b)$ respectively, one sees that every element of $H$ has a representative word that is HNN-reduced with respect to $G\leq H$.

The following two facts are standard \cite[Theorem 32 (ii) \& Proposition 34]{Cohen1989}.

\begin{lemma}[Britton's lemma]
\label{lem:multi-Britton}
    For an element $h\in H =\langle G, u_\alpha\mid au_\alpha=u_\alpha\varphi(a),\forall a\in A\quad (\alpha\in \Lambda)\rangle$, there exists a unique sequence
    \begin{equation*}
        (\alpha_1, \epsilon_1), \dots, (\alpha_{n}, \epsilon_{n}),
    \end{equation*}
    where $\epsilon_j\in \{\pm 1\}$ for all $j=1,\dots, n$, such that every HNN-reduced word representing $h$ is of the form
    \begin{equation*}
        g_1u_{\alpha_1}^{\epsilon_1}g_2u_{\alpha_2}^{\epsilon_2}\cdots u_{\alpha_n}^{\epsilon_n}g_{n+1},
    \end{equation*}
    where $g_j\in G$ for all $j = 1,\dots, n+1$.\qed
\end{lemma}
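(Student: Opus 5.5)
The plan is to obtain Britton's lemma from the normal form theorem for (multi-)HNN extensions, proved with the van der Waerden action trick. For each $\alpha$, fix right transversals $T_A$ of $A$ in $G$ and $T_{\varphi(A)}$ of $\varphi(A)$ in $G$, both containing $1$. Call a sequence $g_0u_{\alpha_1}^{\epsilon_1}g_1\cdots u_{\alpha_n}^{\epsilon_n}g_n$ a \emph{normal form} if:
\begin{itemize}
\item $g_0\in G$ is arbitrary;
\item $g_j\in T_A$ when $\epsilon_j=-1$, and $g_j\in T_{\varphi(A)}$ when $\epsilon_j=+1$;
\item there is no subsequence $u_\alpha^{\epsilon}\,1\,u_\alpha^{-\epsilon}$.
\end{itemize}
Let $\Omega$ be the set of normal forms. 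Let $G$ act on $\Omega$ by left multiplication on $g_0$. Let $u_\alpha^{\pm1}$ act by the obvious rule: prepend the letter, rewrite $g_0$ as a coset representative times an element of the relevant subgroup, push that element through via $\varphi^{\pm1}$, and cancel if a pinch $u_\alpha^{\epsilon}1u_\alpha^{-\epsilon}$ appears. One checks that the actions of $u_\alpha$ and $u_\alpha^{-1}$ are mutually inverse bijections. One also checks that $a u_\alpha$ and $u_\alpha\varphi(a)$ act identically for $a\in A$. This gives a homomorphism $H\to\mathrm{Sym}(\Omega)$.

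Next, show that every element is represented by a unique normal form. Applying an element to the empty normal form $(1)$ recovers its normal form, so the map from elements to normal forms is injective. Any word can be reduced to a normal form using the relations, so the map is surjective as well.

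Then deduce the lemma. Take an HNN-reduced word $g_1u_{\alpha_1}^{\epsilon_1}\cdots u_{\alpha_n}^{\epsilon_n}g_{n+1}$ and convert it to normal form from right to left. At each step, the element of $A$ or $\varphi(A)$ pushed leftward across $u_{\alpha_j}^{\epsilon_j}$ is absorbed into $g_j$. A cancellation could only occur if some modified $g_j$ lay in the subgroup at a position $u_\alpha^{-\epsilon}g_ju_\alpha^{\epsilon}$. Since the pushed factor lies in that same subgroup, this would force the original $g_j$ to lie in it too, which would be a pinch in the original word. Hence no cancellation occurs, and the normal form has exactly the letter sequence $(\alpha_1,\epsilon_1),\dots,(\alpha_n,\epsilon_n)$. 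Uniqueness of normal forms then makes this sequence an invariant of $h$.

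I expect the main obstacle to be the well-definedness check of the action. It requires verifying that the $u_\alpha$-action is bijective and respects the defining relations, with careful case analysis on whether the leading letter cancels. Since the paper cites \cite[Theorem~32(ii)]{Cohen1989}, I would ultimately follow that standard argument rather than redo every case.
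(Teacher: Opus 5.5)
Your proposal is correct: the van der Waerden action gives uniqueness of normal forms, and your observation that the factor pushed across $u_{\alpha_j}^{\epsilon_j}$ lies in exactly the subgroup that governs a pinch at $u_{\alpha_{j-1}}^{\epsilon_{j-1}}g_ju_{\alpha_j}^{\epsilon_j}$ is the key point that makes the stable-letter sequence an invariant of $h$. One small wording slip: with right transversals you write $g_0$ as a subgroup element times a representative, not the reverse. The paper gives no proof of its own and simply cites this standard fact from Cohen's book, so your sketch is essentially the standard argument behind that citation.
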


\begin{lemma}
\label{lem:restrictHNN}
    Let $B\le G$. If $\varphi(B\cap A) = B\cap \varphi(A)$, then for any subset $\Theta\subset \Lambda$, the subgroup of $H$ generated by $B\cup\{u_\alpha\mid \alpha\in \Theta\}$ is presented by
    $$\langle B,u_\alpha \mid bu_\alpha=u_\alpha\varphi(b),\ \forall b\in B\cap A \quad (\alpha \in \Theta)\rangle,$$ 
    and its intersection with $G$ is exactly $B$.\qed
\end{lemma}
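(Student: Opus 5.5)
We prove Lemma~\ref{lem:restrictHNN} by constructing the abstract group
$$\widetilde H=\langle B,u_\alpha \mid bu_\alpha=u_\alpha\varphi(b),\ \forall b\in B\cap A \quad (\alpha \in \Theta)\rangle,$$
which is itself a multi-HNN extension of $B$ relative to the restriction $\varphi|_{B\cap A}\colon B\cap A\to B$. This restriction lands in $B$ because $\varphi(B\cap A)=B\cap\varphi(A)\subseteq B$. Sending $B$ to $B$ and $u_\alpha$ to $u_\alpha$ respects the defining relations, so it gives a natural homomorphism $\iota\colon\widetilde H\to H$. Its image is the subgroup generated by $B\cup\{u_\alpha\mid\alpha\in\Theta\}$. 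It therefore suffices to show that $\iota$ is injective, and then to identify the intersection of the image with $G$.

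For injectivity, take a nontrivial $h\in\widetilde H$. Write it as a word $b_1u_{\alpha_1}^{\epsilon_1}b_2\cdots u_{\alpha_n}^{\epsilon_n}b_{n+1}$ that is HNN-reduced with respect to $B\le\widetilde H$, with $b_j\in B$. The key step is to check that the same word is HNN-reduced with respect to $G\le H$.
\begin{itemize}
\item Suppose there is a pinch $u_\alpha^{-1}b_ju_\alpha$ in $H$. Then $b_j\in A$, so $b_j\in B\cap A$, and this is already a pinch in $\widetilde H$, which is impossible.
\item Suppose there is a pinch $u_\alpha b_ju_\alpha^{-1}$ in $H$. Then $b_j\in\varphi(A)\cap B=\varphi(B\cap A)$. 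This is exactly where the hypothesis is used: it makes the second kind of pinch in $H$ also a pinch in $\widetilde H$, again a contradiction.
\end{itemize}
So the word is HNN-reduced in $H$. If $n\ge1$, Lemma~\ref{lem:multi-Britton} says that every reduced representative of $\iota(h)$ contains $n\ge1$ stable letters. The trivial element is represented by the empty reduced word, so $\iota(h)\ne1$. If $n=0$, then $h=b_1\in B$ with $b_1\neq1$, so $\iota(h)=b_1\ne1$ in $G\le H$.

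For the intersection, the same argument shows that any element of the image has a reduced representative of the above form that is also reduced in $H$. If that element lies in $G$, uniqueness of the stable-letter sequence in Lemma~\ref{lem:multi-Britton} forces $n=0$, so the element is in $B$. Hence the intersection with $G$ is exactly $B$.

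The only delicate point is the pinch check for $u_\alpha b u_\alpha^{-1}$: we must be careful that the associated subgroups of $\widetilde H$ are $B\cap A$ and $\varphi(B\cap A)$, and the hypothesis identifies the latter with $B\cap\varphi(A)$. Everything else follows directly from Britton's lemma.
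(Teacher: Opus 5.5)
Your argument is correct, and it is essentially the standard proof behind the paper's citation of \cite[Theorem~32(ii) and Proposition~34]{Cohen1989}; the paper gives no proof of its own. The key step is that a word HNN-reduced in $\widetilde H$ stays HNN-reduced in $H$ because $\varphi(B\cap A)=B\cap\varphi(A)$ turns any potential $u_\alpha b u_\alpha^{-1}$ pinch in $H$ into one in $\widetilde H$, after which Britton's lemma gives both injectivity and $\iota(\widetilde H)\cap G=B$; in your $n=0$ case you additionally use that $G$ embeds in $H$, which the paper's formulation of Lemma~\ref{lem:multi-Britton} does not state explicitly but which is part of the same standard theory.
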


Taking $\varphi = 1_A$, the assumption $\varphi(B\cap A)=B\cap \varphi(A)$ is automatic. Thus, we obtain the following corollary.

\begin{corollary}
\label{cor:centralHNN}
    For any $B\leq G$, the subgroup of $H=\langle G, u \mid au=ua, \forall a\in A\rangle$ generated by $B\cup \{u\}$ is presented by
    $$\langle B, u\mid bu=ub,\ \forall b\in B\cap A \rangle,$$ 
    and its intersection with $G$ is $B$. \qed
\end{corollary}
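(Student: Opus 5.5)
The plan is to prove Corollary~\ref{cor:centralHNN} as a direct specialization of Lemma~\ref{lem:restrictHNN}. Take $\Lambda$ to be a singleton $\{\alpha\}$ with $u=u_\alpha$, $\varphi=1_A$ (the inclusion $A\hookrightarrow G$), and $\Theta=\Lambda$. Then $H=\langle G,u\mid au=ua,\ \forall a\in A\rangle$ is exactly the HNN extension of Lemma~\ref{lem:restrictHNN}.

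First we check the hypothesis $\varphi(B\cap A)=B\cap\varphi(A)$. Since $\varphi$ is the identity on $A$, the left side is $B\cap A$ and the right side is $B\cap A$, so the hypothesis holds for every subgroup $B\le G$.

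Applying Lemma~\ref{lem:restrictHNN} then says that the subgroup generated by $B\cup\{u\}$ is presented by $\langle B,u\mid bu=u\varphi(b),\ \forall b\in B\cap A\rangle$, which with $\varphi(b)=b$ is $\langle B,u\mid bu=ub,\ \forall b\in B\cap A\rangle$. The same lemma gives that its intersection with $G$ is $B$.

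There is no real obstacle here; the only care needed is to observe that Lemma~\ref{lem:restrictHNN} is stated for a monomorphism $\varphi\colon A\to G$, and the inclusion qualifies.

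If one wanted to justify Lemma~\ref{lem:restrictHNN} itself in this case, the argument would go through Britton's lemma (Lemma~\ref{lem:multi-Britton}). Map the abstract group $\langle B,u\mid\cdots\rangle$, itself an HNN extension of $B$ along $B\cap A$, to $H$. A reduced word there is also HNN-reduced in $H$: a pinch $u^{\mp1}bu^{\pm1}$ with $b\in B$ and $b\in A$ means $b\in B\cap A$, which is already a pinch in the abstract group. Britton's lemma then gives injectivity, and it also shows that elements with no $u$-letters lie in $B$, which gives the intersection claim.
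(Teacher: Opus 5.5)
Your proposal is correct and matches the paper's argument: the corollary is obtained by specializing Lemma~\ref{lem:restrictHNN} to a single stable letter with $\varphi = 1_A$, for which the hypothesis $\varphi(B\cap A)=B\cap\varphi(A)$ holds automatically. Your Britton's-lemma sketch of Lemma~\ref{lem:restrictHNN} in this case is also sound, though the paper only cites that lemma as standard.
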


Let us fix $H = \langle G, u \mid au=ua, \forall a\in A\rangle$ below and denote by $\langle B, u\rangle$ the subgroup generated by $B\cup \{u\}$ in $H$. 
The proofs of the following two statements are routine applications of Lemma~\ref{lem:multi-Britton}.

\begin{lemma}
\label{lem:conjHNN}
    If $K, A\leq D\leq G$ and $P\in G\setminus D$, then $\langle K, P^{-1}uP\rangle \cap \langle D,u\rangle = K$. 
\end{lemma}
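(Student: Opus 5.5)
Set $p\coloneqq P^{-1}uP$. The inclusion $K\subseteq \langle K,p\rangle\cap\langle D,u\rangle$ is clear, so the content of the lemma is the reverse inclusion. The plan is to compare two HNN-reduced forms of the same element and use the uniqueness of the $u$-letter sequence in Lemma~\ref{lem:multi-Britton}.

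First I will put elements of $\langle K,p\rangle$ into a normal form. Any $g\in\langle K,p\rangle$ can be written as
\[
g=k_1p^{\varepsilon_1}k_2\cdots p^{\varepsilon_n}k_{n+1},\qquad k_j\in K,\ \varepsilon_j=\pm1 .
\]
Whenever some inner $k_j$ is trivial and $\varepsilon_{j-1}=-\varepsilon_j$, the adjacent $p^{\pm1}$ cancel freely and the neighbouring $K$-letters merge. So I may assume no such cancellation occurs. Expanding gives
\[
g=(k_1P^{-1})u^{\varepsilon_1}(Pk_2P^{-1})u^{\varepsilon_2}\cdots u^{\varepsilon_n}(Pk_{n+1}).
\]
This word is HNN-reduced. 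A pinch would require $Pk_jP^{-1}\in A$ for some inner $j$ with $\varepsilon_{j-1}=-\varepsilon_j$, which means $P\in A k_j^{-1}\subseteq D$ because $K,A\le D$. This contradicts $P\notin D$. Note that $Pk_jP^{-1}\in A$ is needed here even when $k_j\neq1$; the argument works because $A\le D$ and $k_j\in D$ force $P\in D$ regardless of whether $k_j$ is trivial.

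Second, suppose $g$ also lies in $\langle D,u\rangle$. By Corollary~\ref{cor:centralHNN}, $g$ has an HNN-reduced expression $d_1u^{\varepsilon_1}d_2\cdots u^{\varepsilon_n}d_{n+1}$ with $d_j\in D$. By Lemma~\ref{lem:multi-Britton}, this expression has the same length and exponent sequence as the one above. I will argue $n=0$ by contradiction, as in the proof of Proposition~\ref{pr:kintAB}. Assume $n\ge1$. The word
\[
(k_1P^{-1})u^{\varepsilon_1}\cdots u^{\varepsilon_n}(Pk_{n+1}d_{n+1}^{-1})u^{-\varepsilon_n}d_n^{-1}\cdots u^{-\varepsilon_1}d_1^{-1}
\]
represents $1$. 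By Britton's lemma it must contain a pinch. The only candidate is in the middle, since the other joints are inherited from the two reduced words. The middle pinch gives $Pk_{n+1}d_{n+1}^{-1}\in A\le D$, so $P\in D$, a contradiction.

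Hence $n=0$ and $g=k_1\in K$. The main obstacle is verifying that the expanded word from $\langle K,p\rangle$ is genuinely reduced. I handle this with the observation from the first step that any pinch would force $P\in D$.
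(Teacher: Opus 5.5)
There is a genuine gap in your first step, where you claim the expanded word is HNN-reduced after only free cancellation. Your overall strategy is the same as the paper's: expand an element of $\langle K,p\rangle$, compare it with a reduced form over $D,u$ via Lemma~\ref{lem:multi-Britton}, and get a contradiction from the middle pinch. Your final step is correct, since $Pk_{n+1}d_{n+1}^{-1}=a\in A$ gives $P=a\,d_{n+1}k_{n+1}^{-1}\in D$. The first step, however, rests on a false implication. From $Pk_jP^{-1}=a\in A$ you only learn $k_j=P^{-1}aP$, i.e.\ $k_j\in P^{-1}AP$, and this says nothing about whether $P\in D$. Unlike in your final step, the $P^{-1}$ on the right does not cancel, so $P\in Ak_j^{-1}$ does not follow.

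Your own remark exposes the problem: for $k_j=1$ the condition $Pk_jP^{-1}\in A$ holds for every $P$. Here is a concrete counterexample to your reducedness claim. Take $G$ free on $a,P$, with $A=\langle a\rangle$, $K=\langle P^{-1}aP\rangle$ and $D=\langle a,P^{-1}aP\rangle$. Then $P\notin D$, because $D$ has exponent sum $0$ in $P$. The element $g=p\,(P^{-1}aP)\,p^{-1}$ admits no free cancellation, yet its expansion is $P^{-1}u\,a\,u^{-1}P$, which contains a $u$-pinch. So the uniqueness in Lemma~\ref{lem:multi-Britton} cannot be applied to your expanded word.

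The fix, which is what the paper does, is to cancel more aggressively before expanding. Since $u$ commutes with $A$, the element $p=P^{-1}uP$ commutes with $P^{-1}AP$; equivalently, $H=\langle G,p\mid bp=pb,\ b\in P^{-1}AP\rangle$. So whenever an inner $k_j$ lies in $P^{-1}AP$ with $\varepsilon_{j-1}=-\varepsilon_j$, you may delete $p^{\varepsilon_{j-1}}$ and $p^{\varepsilon_j}$ and merge $k_{j-1}k_jk_{j+1}$ into a single element of $K$. Iterate this until no such $j$ remains. The resulting word has $k_j\notin P^{-1}AP$ at every such position, and this condition is exactly $Pk_jP^{-1}\notin A$, so the expansion is genuinely $u$-reduced. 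With that replacement, the rest of your argument goes through unchanged.
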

\begin{proof}
    It is clear that $K\subset \langle K, P^{-1}uP\rangle \cap \langle D,u\rangle$. For the converse, set $p\coloneqq P^{-1}uP$. Then
    \begin{equation*}
        H = \langle G, p \mid bp = pb,\ \forall b\in P^{-1}AP\rangle. 
    \end{equation*}
    Let $g\in \langle K,p\rangle \cap \langle D, u \rangle$. 
    Since the HNN extension is relative to the identity map, expanding $g$ in $K,p$ and applying pinch reduction gives the following HNN-reduced form:
    \begin{equation}
    \label{eq:kp-reduced}
        g = k_1p^{\epsilon_1}\cdots k_n p^{\epsilon_n}k_{n+1},
    \end{equation}
    where $\epsilon_j\in \{\pm 1\}$ and $ k_j\in K$ with $k_j\not\in P^{-1}AP$ whenever $2\le j\le n$ and $\epsilon_{j-1}=-\epsilon_j$. Substituting $p= P^{-1}uP$ back gives
    \begin{equation*}
        g = (k_1P^{-1}) u^{\epsilon_1} (Pk_2P^{-1}) u^{\epsilon_2}\cdots u^{\epsilon_n} (P k_{n+1}),
    \end{equation*}
    which is also HNN-reduced in terms of the $u$-extension, since a $u$-pinch would imply $k_j\in P^{-1}AP$ for some $j$ with $\epsilon_{j-1}=-\epsilon_j$. 

    On the other hand, expanding $g$ in $D,u$ gives the following HNN-reduced form:
    \begin{equation*}
        g = d_1u^{\epsilon_1}\cdots u^{\epsilon_n}d_{n+1},
    \end{equation*}
    where $d_j \in D$. Hence
    \begin{equation*}
        (k_1P^{-1}) u^{\epsilon_1} (Pk_2P^{-1}) u^{\epsilon_2}\cdots u^{\epsilon_n} (P k_{n+1}) d_{n+1}^{-1} u^{-\epsilon_n}\cdots u^{-\epsilon_1} d_1^{-1} = 1.
    \end{equation*}
    If $n\neq 0$, $ u^{\epsilon_n} (P k_{n+1}) d_{n+1}^{-1} u^{-\epsilon_n}$ must be a $u$-pinch by Lemma~\ref{lem:multi-Britton}, which gives
    \begin{equation*}
        Pk_{n+1}d_{n+1}^{-1}\in A.
    \end{equation*}
    Since $k_{n+1}, d_{n+1}\in D$, this implies that $P\in D$, contradicting the assumption. Therefore $n =0$, so $g\in K$ from \eqref{eq:kp-reduced}.
\end{proof}

\begin{lemma}
\label{lem:intCentHNN}
    Let $A, L, M\leq G$. If $C\leq A\cap L \cap M$ satisfies $C = L\cap AMA$, where $AMA$ denotes the indicated product set, then $\langle L,u\rangle\cap \langle M, u\rangle = \langle C, u\rangle$. 
\end{lemma}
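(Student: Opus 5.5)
The inclusion $\langle C,u\rangle\subseteq\langle L,u\rangle\cap\langle M,u\rangle$ is immediate from $C\le L\cap M$, so the plan is to prove the reverse inclusion with Britton's lemma (Lemma~\ref{lem:multi-Britton}), comparing HNN-reduced forms coming from $L$ and from $M$. Let $g\in\langle L,u\rangle\cap\langle M,u\rangle$. Because the extension is relative to $1_A$, we can write $g$ as an HNN-reduced word $g=l_1u^{\epsilon_1}l_2\cdots u^{\epsilon_n}l_{n+1}$ with $l_j\in L$. Pinch reduction multiplies neighbouring $L$-letters and replaces $u^{-\epsilon}au^{\epsilon}$ by $a$. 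When $a\in L$ the result stays in $L$, so reduction preserves the form. Similarly $g=m_1u^{\mu_1}\cdots u^{\mu_{n'}}m_{n'+1}$ with $m_j\in M$, also HNN-reduced.

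By Lemma~\ref{lem:multi-Britton} we get $n=n'$ and $\epsilon_j=\mu_j$. The standard consequence of Britton's lemma for two reduced forms of the same element also gives elements $a_1,\dots,a_n\in A$ with
\[
l_1=m_1a_1^{-1},\qquad l_j=a_{j-1}m_ja_j^{-1}\ (2\le j\le n),\qquad l_{n+1}=a_nm_{n+1}.
\]
This holds because the extension is central: the $u$'s commute with $A$, so the transition elements pass through unchanged. To obtain these elements, consider the word $l_1u^{\epsilon_1}\cdots l_{n+1}m_{n+1}^{-1}u^{-\epsilon_n}\cdots m_1^{-1}$, which equals $1$. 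Britton's lemma forces a pinch at the middle, so $l_{n+1}m_{n+1}^{-1}=:a_n\in A$. Collapsing it and inducting inward gives all the $a_j$, together with the last relation $m_1^{-1}l_1\cdot a_1\cdot$ (trivial) $=1$.

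Each $l_j$ therefore lies in $L\cap AMA$, which equals $C$ by hypothesis. For the end terms, $l_1=m_1a_1^{-1}\in MA\subseteq AMA$ and $l_{n+1}\in AM\subseteq AMA$, using $1\in A$. Hence $g=l_1u^{\epsilon_1}\cdots l_{n+1}\in\langle C,u\rangle$. The case $n=0$ is simply $g\in L\cap M\subseteq L\cap AMA=C$.

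The only delicate step is the bookkeeping of the transition elements $a_j$ in the inductive Britton argument. At each stage one must check that the collapsed middle segment remains a product $A\cdot M\cdot A$ twisted appropriately; since $\varphi$ is the identity, no automorphism intervenes, and this is where centrality is used. The rest is routine.
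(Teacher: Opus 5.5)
Your proof is correct and follows the same route as the paper: you compare the HNN-reduced forms of $g$ over $L$ and over $M$, use Britton's lemma to force the middle pinch $l_{n+1}m_{n+1}^{-1}\in A$, and then work inward. Your explicit transition elements, which give $l_j=a_{j-1}m_ja_j^{-1}\in L\cap AMA=C$ for every $j$, spell out what the paper compresses into ``repeatedly applying $u$-pinch reductions.''
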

\begin{proof}
    It is clear that $\langle C, u\rangle\subset \langle L,u\rangle\cap \langle M, u\rangle$. For the converse, let $g\in \langle L,u\rangle\cap \langle M, u\rangle$. Since the HNN extension is relative to the identity map, expanding $g$ in $L,u$ and $M,u$ respectively gives HNN-reduced forms 
    \begin{equation*}
        g =  l_1u^{\epsilon_1}\cdots u^{\epsilon_n}l_{n+1},
    \end{equation*}
    where $\epsilon_j\in \{\pm 1\}$ and $l_j\in L$, and
    \begin{equation*}
        g =  m_1u^{\epsilon_1}\cdots u^{\epsilon_n}m_{n+1},
    \end{equation*}
    where $m_j\in M$. Hence
    \begin{equation*}
        l_1u^{\epsilon_1}\cdots u^{\epsilon_n}l_{n+1} m_{n+1}^{-1} u^{-\epsilon_n}\cdots u^{-\epsilon_1} m_1^{-1} = 1,
    \end{equation*}
    and by Lemma~\ref{lem:multi-Britton}
    \begin{equation*}
        l_{n+1}m_{n+1}^{-1}\in A,
    \end{equation*}
    hence $l_{n+1}\in L\cap AMA = C$. It follows that $l_j\in C$ for all $j$ by repeatedly applying $u$-pinch reductions, concluding that $g\in \langle C, u\rangle$.
\end{proof}

\printbibliography

\end{document}